\def\ps@pprintTitle{
	\let\@oddhead\@empty
	\let\@evenhead\@empty
	\def\@oddfoot{\footnotesize\itshape\hfill}
	\let\@evenfoot\@oddfoot
}
\numberwithin{equation}{section}
\newtheorem{prop}{Proposition}[section]
\newtheorem{thm}[prop]{Theorem}
\newtheorem{lem}[prop]{Lemma}
\newtheorem{cor}[prop]{Corollary}
\theoremstyle{definition}
\theoremstyle{remark}
\journal{journal}
\begin{document}

\begin{frontmatter}

\title{Global convergence for time-periodic  systems with negative feedback and applications }

\author[a]{Yi Wang\fnref{W}} 
\author[a]{Wenji Wu\fnref{W}} 
\affiliation[a]{organization={School of Mathematical Sciences, University of Science and Technology of China},
	addressline={wangyi@ustc.edu.cn,wuwenji@mail.ustc.edu.cn}, 
	city={Hefei},
	postcode={230026}, 
	state={Anhui},
	country={China}}
\author[b]{Hui Zhou\fnref{Z}} 

\affiliation[b]{organization={School of Mathematics and Statistics, Hefei Normal University},
addressline={zhouh16@mail.ustc.edu.cn}, 
	city={Hefei},
	postcode={230601}, 
	state={Anhui},
	country={China}}   

\fntext[W]{Supported by the National Natural Science Foundation of China (No.12331006), the Strategic Priority Research Program of CAS (No.XDB0900100) and the National Key R\&D Program of China	(No.2024YFA1013603, 2024YFA1013600).}
\fntext[Z]{Supported by Natural Science Foundation for Anhui Province Universities (2023AH040161).}
\cortext[W]{Corresponding author: zhouh16@mail.ustc.edu.cn (H. Zhou).}

\begin{abstract}
	For the discrete-time dynamical system 
	generated by the  Poincar\'e map $T$ of  a time-periodic closed-loop negative feedback system, we present an amenable condition which enables us to obtain the global convergence of the orbits. This yields  the  global convergence to the  harmonic periodic solutions of the corresponding time-periodic systems with negative feedback. Our approach is motivated by embedding the negative feedback system into a larger time-periodic monotone dynamical systems. We further utilize the theoretical results to obtain the global convergence to  periodic solutions for the time periodically-forced gene regulatory models. Numerical simulations  are  exhibited  to illustrate the feasibility of our  theoretical results for this model.
\end{abstract}

\begin{keyword}
global convergence, negative feedback, time-periodic  systems,  Poincar\'e map, gene regulatory models
\end{keyword}

\end{frontmatter}

\section{Introduction}

Feedback control systems are crucial in both engineered and natural biological systems, ensuring stability and adaptability, or maintaining balance and optimal performance \cite{Angeli-Sontag-2003,cyclic-feedback-2020,monotone-cyclic-feedback-Mallet-1996,monotone-cyclic-feedback-Mallet-1990}. In particular, negative feedback systems are extensively utilized in biological processes  \cite{,de-Mottoni-Schiaffino-1981,Existence-periodic-solutions-negative-feedback-1977,Transversality-2017}. Meanwhile, periodic phenomena arise naturally in  population biology when day-night cycles or seasonal variation in parameters are accounted for \cite{de-Mottoni-Schiaffino-1981,Smith-1986,Zhao-book-2017}. In the present paper, we consider the following time-periodic cyclic system with negative feedback: 
\begin{equation}\label{cyclic-system}
	\left\lbrace
	\begin{aligned}
		\dot{x}_1&=F_1\left(t,x_1,x_n\right),& \\
		\dot{x}_i&=F_i\left(t,x_i,x_{i-1}\right),& \, 2 \leqslant i \leqslant n-1 ,\\
		\dot{x}_n&=F_n\left(t,x_n,x_{n-1}\right),&
	\end{aligned}
	\right.
\end{equation}
where the nonlinearity $F=\left(F_1,F_2,\cdots,F_n\right)$ is continuous, together with their partial derivatives with respect to $x$, for any $(t,x) \in  \mathbb{R}_+ \times X $. Here  $\mathbb{R}_+ =[0,\infty)$  and $X $ is a nonempty order-convex subset of $ \mathbb{R}^n$ (see Section \ref{section2}).  Moreover, there exists $\delta_i \in \{-1,+1\}$ such that 
\begin{equation*}\label{cyclic-system-character}
	\delta_i \frac{\partial F_i\left(t,x_i,x_{i-1}\right)}{\partial x_{i-1}} >0 ,\, \,\,\, \mathrm{for \, all }\,\, (t,x_i,x_{i-1}) \in \mathbb{R}_+ \times X \times X   \, \, \mathrm{and} \,\, 1 \leqslant i \leqslant n.
\end{equation*}
It is also assumed that there exists $\tau>0 $ such that 
\begin{equation}\label{period-cyclic-system}
	F(t,x)=F\left(t+\tau,x\right),\, \,(t,x)\in \mathbb{R}_+ \times X.
\end{equation}

For the autonomous case, that is, $F$ does not depend on $t$ in \eqref{cyclic-system},  Mallet-Paret and Smith \cite{monotone-cyclic-feedback-Mallet-1990} have established  the remarkable Poincar\'e-Bendixson theory, which yields that the $\omega$-limit set of any bounded orbit must be a nontrival periodic orbit if it contains no equilibrium. Following \cite{monotone-cyclic-feedback-Mallet-1990}, we call 
 system \eqref{cyclic-system}-\eqref{period-cyclic-system} 
a time-periodic \textit{monotone cyclic feedback system} (MCFS).  Let $\Delta= \delta_1 \delta_2 \cdots \delta_n =-1 $. Then system \eqref{cyclic-system}-\eqref{period-cyclic-system} turns out to be a time-periodic MCFS with  negative feedback. It is worth pointing out that such system is not monotone in the classical sense of Hirsch \cite{Stability-convergence-1988,Monotone-2004,Monotone-1995} with respect to certain usual convex cones. 

 Due to the complexity introduced by external periodic driving, very few studies have concentrated on dynamics of  the time-periodic MCFS with negative feedback.  To the best of our knowledge, Tere\v{s}\v{c}\'ak \cite{Terescak-1994} showed  that any $\omega$-limit set of the Poincar\'e map associated with system \eqref{cyclic-system}-\eqref{period-cyclic-system} can be embedded into  the plane (see also \cite{time-period-strongly-2-cooperative-system-2024} for generalized time-periodic  systems  with negative feedback). In the present paper, we will focus on the global  asymptotic behavior of the time-periodic negative feedback system \eqref{cyclic-system}-\eqref{period-cyclic-system} that can be viewed as a closed-loop system 
\begin{equation}\label{f(t, x, h(x))-1}
	\dot{x} = F(t,x) \equiv f(t,x,h(x)),
\end{equation}
which satisfies the following assumptions (see more details in Section \ref{section2}):
\begin{enumerate}
	\item[($\mathbf{A1}$)]   For each   $u \in U \subset \mathbb{R}^m$,  $\dot{x}=f(t, x,u)$ can generate a  time-periodic monotone system on $X$ in the usual order relation sense.
	\item[($\mathbf{A2}$)]  For each $(t,x) \in \mathbb{R}_+ \times  X $, $f(t,x, u)$ is increasing in $u $, relative to an order relation  \textquotedblleft  $ \leqslant_U $\textquotedblright.
	\item[($\mathbf{A3}$)]  The output  function $h:X \rightarrow U $
	is decreasing.   
\end{enumerate}
Here, the set $U \subset \mathbb{R}^m $ is called the input space and      \textquotedblleft  $ \leqslant_U $\textquotedblright   \, is the partial order in $U$,  and the assumption ($\mathbf{A3}$) exhibits characteristic of the  negative feedback.  

A prototypical example of system  \eqref{f(t, x, h(x))-1} is the following gene regulatory models:
\begin{equation}\label{gene-regulatory-model}
	\left\lbrace
	\begin{aligned}
		\dot{x_1}&=g(x_n)-\alpha_1(t)\, x_1 ,&\\
		\dot{x}_i&=x_{i-1}-\alpha_i(t) \,x_i,& \, 2 \leqslant i \leqslant n-1 ,\\
		\dot{x}_n&=x_{n-1} -\alpha_n(t)\,x_n,&
	\end{aligned}
\right.
\end{equation} 
where $\alpha_i(t)=\alpha_i(t+\tau) >0 $ is continuous for any $i =1, \cdots, n $, and $g:\mathbb{R}_+ \to \mathbb{R}_+  $ is continuously differentiable and satisfies $g(0) >0 $ and its derivative $g' < 0 $ (see \cite{cyclicgene-1987} for more details).

For the autonomous case,  Enciso et al.\,\cite{nonmonotone-systems-decomposable-2006} investigated the  asymptotic behavior of the negative feedback system \eqref{f(t, x, h(x))-1}. Under the   assumptions ($\mathbf{A1}$)-($\mathbf{A3}$)  with $f$ being independent of time $t$, they constructed some  positively invariant subset $B \subset X $; and moreover, under the additional  condition
\begin{equation*}\label{eql/4}
	a ,b \in B  \,\,\, \mathrm{with}	\,\,\, a \leqslant b  \,\, \,\mathrm{and}	\,\, \, f(a,h(b))=0=f(b,h(a)) \Longrightarrow a=b,
\end{equation*}
they have obtained  the  global convergence of any solution for such autonomous system.   

In this paper, we will investigate the dynamics of the periodic map (i.e.\;Poincar\'e map), devoted by $T$,  for  the time-periodic closed-loop negative feedback system \eqref{f(t, x, h(x))-1}. Motivated by the work  in \cite{nonmonotone-systems-decomposable-2006}, we will  embed the discrete-time dynamical system $\{T^n\}_{n \geqslant 0 }$ on $X$ into an extended  discrete-time  dynamical system $\{{\tilde{T}}^n\}_{n \geqslant 0 }$, which is defined  on the Cartesian Square $X \times X $, generated by the time-periodic symmetric system 
\begin{equation}\label{f(t,x,h(y))}
	\left\lbrace
	\begin{aligned}
		\dot{x}&=f\left(t,x,h(y)\right), \\
		\dot{y}&=f\left(t,y,h(x)\right),
	\end{aligned}
	\right.
\end{equation} such that $\tilde{T}(x,x)=(Tx,Tx)$ on the invariant diagonal of $X \times  X $. This  enables us to  find a positively invariant region $B$ for the  Poincar\'e map $T$ associated with system \eqref{f(t, x, h(x))-1}.  Furthermore, we provide an amenable condition that, for any 
two $\tau$-periodic continuous functions $a(\cdot), \, b(\cdot)$ with $a(0),b(0) \in B $ and $a(\cdot)\leqslant_K b(\cdot) $, there holds
\begin{eqnarray*}\label{amenable condition}
	\int_{0}^{\tau} f\left(t,a(t),h(b(t))\right) \mathrm{d} t=0=\int_{0}^{\tau} f\left(t,b(t),h(a(t))\right) \mathrm{d} t \Longrightarrow a(\cdot)=b(\cdot),
\end{eqnarray*}		
by which  we will show the global convergence of the orbits for the discrete-time dynamical system $\{T^n\}_{n \geqslant 0 }$ (see Theorem \ref{global-convergence}). 
In the terminology  of differential equations,  the global convergence to a unique $\tau$-periodic solution is thus obtained for  system \eqref{f(t, x, h(x))-1} (see Corollary \ref{global-convergence-system}). 

By applying our theoretical results to the time periodically-forced gene regulatory system  \eqref{gene-regulatory-model} with  $\alpha_{i}(t)= \alpha_{i},1 \leqslant i \leqslant n-1$, we obtain that any solution will approach to a unique $\tau$-periodic solution provided that the following condition holds: 
$$     \max\left\{-g'(u): 0 \leqslant u \leqslant  \alpha ^{-1} g(0) \right\} < \alpha ,  \,\,\,\,\,  \mathrm{where} \,\,\,\,  \alpha=  \prod_{i=1}^{n} \alpha_i   \,\,\,\,  \mathrm{and}  \,\,\,\,\, \alpha_{n}=\min_{0 \leqslant t \leqslant \tau }\alpha_{n}(t). \leqno(\mathbf{H})$$
Moreover,  numerical simulations  for the dynamics of the  time-periodic gene regulatory model are illustrated to show the obtained theoretical results for periodic system \eqref{f(t, x, h(x))-1}.

This paper is organized as follows. In 	Section \ref{section2}, we  introduce some notations and preliminaries, and state main results for the time-periodic closed-loop system
\eqref{f(t, x, h(x))-1} with negative feedback.  In Section \ref{section3},  we give the proof of our main results. Finally, we explore  the dynamics of  time periodically-forced  gene regulatory models  in Section  \ref{section4}.

\section{Notations and main results}\label{section2}

Let $\mathbb{R}^n$ be ordered by $\leqslant_K$ generated by a cone $K$ with nonempty interior. We write $ x \leqslant_K y $ if $y-x \in K ,  x <_K y $ if $y-x \in K $ and $y \neq x $, and $ x \ll_K y $ if $y-x \in \mathrm{Int} (K) $, where  $\mathrm{Int} (K) $ is the interior of $K$. Denote by $K^*$ the cone dual to $K$. If $ u,v \in \mathbb{R}^n $ satisfy $u <_K v $, then $[u,v]_K=\{x\in \mathbb{R}^n : u \leqslant_K x \leqslant_K v \}$ is called the order interval generated by $u$ and $v$. It is well known that order intervals in finite dimensional spaces are bounded. The set $X$ is said to be order-convex if the order interval $[u,v]_K  \subset X$ whenever $u,v \in X $ satisfy $u<_K v$. Let the nonempty state space  $X \subset \mathbb{R}^n $ be  order-convex and  $U \subset \mathbb{R}^m$ be the input space.  Denote by
 \textquotedblleft  $ \leqslant_U $\textquotedblright  \, the partial order in $U$. Hereafter, we write $C\left([0,\tau],X\right)$ as the space of all continuous functions on $[0,\tau]$ taking values in $X$. 

The time-periodic negative feedback system
\begin{eqnarray}\label{f(t,x,h(x))}
	\dot{x}  = F(t,x) \equiv f(t,x,h(x)) ,\,\, (t,x) \in  \mathbb{R}_+ \times X 
\end{eqnarray}
can be viewed as a closed-loop system 
\begin{equation*}\label{f(t,x,u)}
		\left\lbrace
	\begin{aligned}
		\dot{x}&=f(t,x,u),\,\, u \in U,& \\
		y&=h(x)& 
	\end{aligned}
		\right.
\end{equation*}
consisting of an open-loop  with input-output system 
by identifying input and output $y=u$ (see \cite{Angeli-Sontag-2003} for more details). 

Assume that $f:\mathbb{R}_+ \times X \times U \to \mathbb{R}^n $ and $h: X \to U $ are continuous and satisfy
\begin{enumerate}
	\item[($\mathbf{A1}$)]    For each  $(t,u) \in \mathbb{R}_+ \times U,\, x \mapsto f(t,x,u)$ is quasimonotone in the sense that:  for any $x,y \in X $, $\lambda \in K^* $ with $x \leqslant_K y $ and $ \lambda(x)=\lambda(y) $ implies $\lambda\left(f\left(t,x,u\right)\right) \leqslant \lambda\left(f\left(t,y,u\right)\right)$.
	\item[($\mathbf{A2}$)]   For each  $ (t,x) \in \mathbb{R}_+ \times X,\, u \leqslant_U v \Longrightarrow  f(t,x,u) \leqslant_K f(t,x,v) $.
	\item[($\mathbf{A3}$)]  For any $ x,y \in X$, $ x \leqslant_K y \Longrightarrow h(y) \leqslant_U h(x) $.
\end{enumerate}

Write $\psi(t,t_0,x_0)$ as the unique solution for system \eqref{f(t,x,h(x))} satisfying $\psi(t_0,t_0,x_0)=x_0$. We will assume without further mentioning that the domain of $\psi(t,t_0,x_0)$ includes $[t_0,\infty)$ in case $x_0 \in X $.
We embed the  closed-loop system (\ref{f(t,x,h(x))}) into a  larger symmetric system on the Cartesian Square $X \times X $ as
\begin{equation}\label{f(t,x,h(y)),f(t,y,h(x))}
	\left\lbrace
	\begin{aligned}
		\dot{x}&=f(t,x,h(y)), \\
		\dot{y}&=f(t,y,h(x)).
	\end{aligned}
	\right.
\end{equation} 

For $z_0 \in X \times X$, denote  $\phi(t,t_0,z_0)$ be  the unique solution of system \eqref{f(t,x,h(y)),f(t,y,h(x))} satisfying  $\phi(t_0,t_0,z_0)=z_0 $. Due to the uniqueness of solutions, the diagonal 

$$D=\{(x,x): x \in X\}$$
is positively invariant for (\ref{f(t,x,h(y)),f(t,y,h(x))}). More precisely, $ \phi(t,t_0,z_0) \in D $ whenever $ z_0=(x_0,x_0) \in D $, for all $ t_0 \geqslant 0 $. In particular,  $\phi(t,t_0,z_0)=\left(\psi (t,t_0,x_0),\psi(t,t_0,x_0)\right)$.

It is convenient to define the fundamental object of study in this paper, i.e., the Poincar\'e map for the $\tau$-periodic system
\eqref{f(t,x,h(x))}:
\begin{eqnarray*}\label{Poincare-map-T}
      T(x)=\psi(\tau,0,x),\,\, \mathrm{for\, any} \,\, x \in X.
\end{eqnarray*}
It is known that $T $ is a $C^1 $-diffeomorphism onto its image which is orientation preserving. According to the existence and uniqueness of the solution of the initial value problem and the periodicity of the vector field, one has 
\begin{eqnarray*}\label{T^n(z)}
	T^n(x)=\psi(n\tau,0,x),\,\, \mathrm{for\, any} \,\, n \geqslant 1,
\end{eqnarray*}
where $T^n=\underbrace{T \circ T \circ \cdots \circ T}_{n}$ is an $n$-order composite mapping.\\
Furthermore, we define  the Poincar\'e map $\tilde{T}$ for the $\tau$-periodic system \eqref{f(t,x,h(y)),f(t,y,h(x))} as
\begin{eqnarray*}\label{Poincare-map-tildeT}
	\tilde{T}(z)=\phi(\tau,0,z),\,\, \mathrm{for\, any} \,\,  z \in X \times X .
\end{eqnarray*}
In particular, if $z=(x,x) \in D $, then $\tilde{T}(x,x)=(Tx,Tx)$.

For the Poincar\'e map ${T}$, the orbit of $x_0$ is $O_{T}(x_0)=\{T^nx_0: n \geqslant 0 \}$, and the $\omega$-limit set of $x_0$ is $\omega_{T}(x_0)=\{y \in X : \mathrm{there \, exist}\; n_k \to \infty \; \mathrm{such \, that}\; T^{n_k}x_0 \to y ,\, \mathrm{as}\ k \to \infty \}.$ A subset $A \subset X $ is called invariant with respect to $T$ (resp. positively invariant) if $TA=A$\;(resp. $TA \subset A$). Obviously, if $O_T(x_0)$ has compact closure, then $\omega_T(x_0)$ is nonempty, compact and invariant.
Call  $x_0 $ a fixed point of $T$ if $Tx_0=x_0 $. Similarly, for the Poincar\'e map $\tilde{T}$,  we write $O_{\tilde{T}}(z_0),\, \omega_{\tilde{T}}(z_0) $ for the orbit and the $\omega$-limit set of $z_0 \in X \times X$, respectively.
 
 By virtue of  ($\mathbf{A1}$)-($\mathbf{A3}$),  the Poincar\'e map $\tilde{T}$ for the extended system (\ref{f(t,x,h(y)),f(t,y,h(x))}) can generate a discrete-time  monotone system on $X \times X \subset  \mathbb{R}^n \times \mathbb{R}^n $ with regard to the cone $C=K \times (-K) $ as below, where $C$ gives rise to the order relation 
 $$(x,y) \leqslant_C (\bar{x},\bar{y}) \iff x \leqslant_K \bar{x} \,\,\,\, \mathrm{and} \,\,\,\,  \bar{y} \leqslant_K y .$$
 The dual cone $C^*$ can be represented as $K^* \times (-K)^*$,  where $(\lambda,-\mu)(x,y)=\lambda(x)-\mu(y) $ holds for all $x,y \in \mathbb{R}^n$ and $\left(\lambda,-\mu \right)\in K^* \times (-K)^*$ (see  Lemma \ref{monotone-system}).

Now we introduce the following amenable condition  and  state our main results as follows.
\begin{enumerate}
	\item[($\mathbf{A4}$)]   There exist  $x_0, y_0  \in X$   such that $x_0\leqslant_K y_0 $ and 
		\begin{eqnarray}\label{integral-condition1}
		\int_{0}^{\tau} f\left(t,y_0(t),h(x_0(t))\right) \mathrm{d}t \leqslant_K 0 \leqslant_K \int_{0}^{\tau} f\left(t,x_0(t),h(y_0(t))\right) \mathrm{d}t ,
	\end{eqnarray}	
	where $\left(x_0(t),y_0(t)\right)=\phi(t,0,(x_0,y_0))$ is the solution of system
		\eqref{f(t,x,h(y)),f(t,y,h(x))}. Moreover,
for any 
	two $\tau$-periodic continuous functions $a(\cdot), \, b(\cdot) \in C\left([0,\tau],X\right) $ with $a(0),b(0) \in [x_0,y_0]_K $ and $a(\cdot)\leqslant_K b(\cdot) $, there holds
	\begin{eqnarray}\label{integral-condition2}
		\int_{0}^{\tau} f\left(t,a(t),h(b(t))\right) \mathrm{d} t=0=\int_{0}^{\tau} f\left(t,b(t),h(a(t))\right) \mathrm{d} t \Longrightarrow a(\cdot)=b(\cdot).
	\end{eqnarray}		
	
\end{enumerate}

\begin{thm}\label{global-convergence}
	Assume  $\mathrm{(\mathbf{A1}) }$-$\mathrm{(\mathbf{A4})}$  hold. Then  the Poincar\'e map $T$ for system \eqref{f(t,x,h(x))} possesses a unique fixed point  $r \in [x_0,y_0]_K$ such that
	$$\omega_{T}\left([x_0,y_0]_K\right)=\{r\}.$$
\end{thm}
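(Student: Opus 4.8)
The plan is to transfer the problem to the embedded symmetric system \eqref{f(t,x,h(y)),f(t,y,h(x))} and run a standard sub/super-solution argument for the monotone map $\tilde{T}$, then collapse everything back to the diagonal $D$ using the amenable condition $(\mathbf{A4})$. By Lemma \ref{monotone-system}, the map $\tilde{T}$ generates a \emph{monotone} discrete-time system with respect to the cone $C=K\times(-K)$ and its order $\leqslant_C$. Two structural facts drive the argument. First, the system \eqref{f(t,x,h(y)),f(t,y,h(x))} is invariant under the involution $R(x,y)=(y,x)$, so $\phi(t,t_0,Rz)=R\phi(t,t_0,z)$ and hence $\tilde{T}\circ R=R\circ\tilde{T}$. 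Second, writing $\left(x_0(t),y_0(t)\right)=\phi(t,0,(x_0,y_0))$, the fundamental theorem of calculus gives $\int_{0}^{\tau}f\!\left(t,x_0(t),h(y_0(t))\right)\mathrm{d}t=x_0(\tau)-x_0$ and $\int_{0}^{\tau}f\!\left(t,y_0(t),h(x_0(t))\right)\mathrm{d}t=y_0(\tau)-y_0$. Thus \eqref{integral-condition1} reads exactly $x_0\leqslant_K x_0(\tau)$ and $y_0(\tau)\leqslant_K y_0$, i.e. $(x_0,y_0)\leqslant_C\tilde{T}(x_0,y_0)$. Setting $p_0=(x_0,y_0)$ and $q_0=(y_0,x_0)=Rp_0$, the equivariance then yields $\tilde{T}q_0=R\tilde{T}p_0\leqslant_C q_0$, and $x_0\leqslant_K y_0$ gives $p_0\leqslant_C q_0$.

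So $p_0$ is a sub-fixed point and $q_0$ a super-fixed point for the monotone map $\tilde{T}$, bracketing the bounded, positively invariant order interval $[p_0,q_0]_C$. By monotonicity the iterates $\tilde{T}^n p_0$ are nondecreasing and $\tilde{T}^n q_0$ nonincreasing in $\leqslant_C$, both trapped in $[p_0,q_0]_C$; hence they converge to fixed points $p^{*}$ and $q^{*}$ of $\tilde{T}$ with $p^{*}\leqslant_C q^{*}$, and $q^{*}=Rp^{*}$ by equivariance. Writing $p^{*}=(a,b)$ forces $q^{*}=(b,a)$, and $p^{*}\leqslant_C q^{*}$ together with $p^{*}\in[p_0,q_0]_C$ gives both $a\leqslant_K b$ and $a,b\in[x_0,y_0]_K$.

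The crux is to show $p^{*}=q^{*}\in D$, and this is where $(\mathbf{A4})$'s implication \eqref{integral-condition2} enters. The fixed point $p^{*}=(a,b)$ corresponds to a $\tau$-periodic solution $(a(t),b(t))=\phi(t,0,(a,b))$, so periodicity yields $\int_{0}^{\tau}f\!\left(t,a(t),h(b(t))\right)\mathrm{d}t=a(\tau)-a(0)=0$ and likewise $\int_{0}^{\tau}f\!\left(t,b(t),h(a(t))\right)\mathrm{d}t=0$. To invoke \eqref{integral-condition2} I must upgrade $a\leqslant_K b$ (an inequality only at $t=0$) to the pointwise-in-time order $a(\cdot)\leqslant_K b(\cdot)$: since $(a,b)\leqslant_C(b,a)$ and the flow $\phi$ is monotone in $\leqslant_C$, while $\phi(t,0,(b,a))=(b(t),a(t))$ by symmetry, one gets $(a(t),b(t))\leqslant_C(b(t),a(t))$, i.e. $a(t)\leqslant_K b(t)$ for all $t$. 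With $a(0),b(0)\in[x_0,y_0]_K$ already established, \eqref{integral-condition2} forces $a(\cdot)=b(\cdot)$, so $a=b=:r$ and $p^{*}=q^{*}=(r,r)\in D$. As $\tilde{T}(r,r)=(Tr,Tr)$, this means $Tr=r$.

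Finally, for any $x\in[x_0,y_0]_K$ one checks $(x,x)\in[p_0,q_0]_C$, so monotonicity squeezes $\tilde{T}^n p_0\leqslant_C\tilde{T}^n(x,x)\leqslant_C\tilde{T}^n q_0$ between two sequences converging to $(r,r)$; since $C$ is a normal cone in the finite-dimensional space $\mathbb{R}^n\times\mathbb{R}^n$ (so order intervals shrink to a point as their endpoints converge), $\tilde{T}^n(x,x)\to(r,r)$. Because $\tilde{T}^n(x,x)=(T^n x,T^n x)$ on the invariant diagonal, this gives $T^n x\to r$ uniformly over $[x_0,y_0]_K$, whence $\omega_T\!\left([x_0,y_0]_K\right)=\{r\}$; uniqueness of the fixed point is immediate, since any fixed point $s\in[x_0,y_0]_K$ satisfies $T^n s=s\to r$. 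I expect the main obstacle to be precisely the collapse-to-diagonal step: correctly reading the integral relations \eqref{integral-condition1} as the sub/super-fixed-point inequalities and, above all, promoting the endpoint order $a\leqslant_K b$ to the full pointwise order $a(t)\leqslant_K b(t)$ required to apply \eqref{integral-condition2}; the monotone-convergence and squeezing parts are then routine consequences of Lemma \ref{monotone-system} and normality.
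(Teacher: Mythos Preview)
Your proof is correct and follows essentially the same route as the paper: you inline what the paper factors into Lemmas \ref{positively-invariant}--\ref{two-tau-periodic-solutions} (the sub/super-fixed-point reading of \eqref{integral-condition1}, monotone iteration to fixed points $p^*\leqslant_C q^*$, and the symmetry $q^*=Rp^*$), and then apply \eqref{integral-condition2} to the $\tau$-periodic components exactly as the paper does. Your explicit promotion of $a\leqslant_K b$ to $a(t)\leqslant_K b(t)$ via monotonicity of $\phi$ is the same step the paper carries out at the end of Lemma \ref{two-tau-periodic-solutions}.
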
 

In the terminology of  differential equations, we have the following result for  system \eqref{f(t,x,h(x))}.
\begin{cor}\label{global-convergence-system}
	Assume  $\mathrm{(\mathbf{A1}) }$-$\mathrm{(\mathbf{A4})}$  hold. Then  system \eqref{f(t,x,h(x))} has a unique $\tau$-periodic solution $r(t)$.
	Moreover, for any $\bar{x} \in [x_0,y_0]_K$,
	$$\lVert \psi(t,0,\bar{x})-r(t)\lVert\ \rightarrow 0 , \, \,\, \, \, \mathrm{as} \,\, t \to \infty .$$
\end{cor}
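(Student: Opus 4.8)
The plan is to translate the discrete-time statement of Theorem \ref{global-convergence} into the language of the flow $\psi$. The bridge between the two is the standard identification of fixed points of the Poincar\'e map $T$ with $\tau$-periodic solutions, together with the cocycle and periodicity relations for $\psi$. Throughout I write $F(t,x)=f(t,x,h(x))$, which is $\tau$-periodic in $t$ by \eqref{period-cyclic-system}.

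First I would set $r(t):=\psi(t,0,r)$, where $r\in[x_0,y_0]_K$ is the unique fixed point furnished by Theorem \ref{global-convergence}. Since $Tr=\psi(\tau,0,r)=r$ and $F$ is $\tau$-periodic in $t$, the two functions $t\mapsto\psi(t,0,r)$ and $t\mapsto\psi(t+\tau,0,r)$ solve the same initial value problem at $t=0$; by uniqueness of solutions they coincide, so $r(t)$ is a $\tau$-periodic solution. Conversely, any $\tau$-periodic solution $q(t)$ with $q(0)\in[x_0,y_0]_K$ satisfies $Tq(0)=q(\tau)=q(0)$, making $q(0)$ a fixed point of $T$ in $[x_0,y_0]_K$; the uniqueness clause of Theorem \ref{global-convergence} then forces $q(0)=r$, hence $q\equiv r(t)$. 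This yields existence and uniqueness of the $\tau$-periodic solution.

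Next, for the convergence, I fix $\bar x\in[x_0,y_0]_K$ and write each $t\geqslant 0$ as $t=n\tau+s$ with $n=\lfloor t/\tau\rfloor$ and $s\in[0,\tau)$. Using the cocycle property of the solution operator together with the $\tau$-periodicity of $F$ in $t$, I would establish the identities $\psi(n\tau+s,0,\bar x)=\psi(s,0,T^n\bar x)$ and $r(n\tau+s)=r(s)=\psi(s,0,r)$. Consequently
$$\lVert \psi(t,0,\bar x)-r(t)\rVert=\lVert \psi(s,0,T^n\bar x)-\psi(s,0,r)\rVert .$$
By Theorem \ref{global-convergence} we have $T^n\bar x\to r$ as $n\to\infty$, so it remains to control the right-hand side uniformly in $s\in[0,\tau]$.

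The technical heart of the argument, and the step I expect to require the most care, is the uniform continuous dependence on initial data over the compact time interval $[0,\tau]$. Since $[x_0,y_0]_K$ is a bounded order interval that is positively invariant under $T$ (as established in the proof of Theorem \ref{global-convergence}), the tail $\{T^n\bar x\}_{n\geqslant N}$ lies in the compact set $[x_0,y_0]_K$, and the solution values $\{\psi(s,0,z): s\in[0,\tau],\, z\in[x_0,y_0]_K\}$ range over a compact subset of $X$. On this compact set the dependence of $\psi(s,0,\cdot)$ on its initial value is uniformly continuous in $s\in[0,\tau]$; concretely, the local Lipschitz bound on $F$ gives, via Gronwall's inequality, an estimate $\sup_{s\in[0,\tau]}\lVert \psi(s,0,z)-\psi(s,0,r)\rVert\leqslant e^{L\tau}\lVert z-r\rVert$ with a uniform constant $L$. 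Taking $z=T^n\bar x$ and letting $n\to\infty$ forces $\sup_{s\in[0,\tau]}\lVert \psi(n\tau+s,0,\bar x)-r(n\tau+s)\rVert\to 0$, which is exactly $\lVert \psi(t,0,\bar x)-r(t)\rVert\to 0$ as $t\to\infty$.
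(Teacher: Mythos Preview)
Your proposal is correct and follows essentially the same route as the paper: the paper does not give a standalone proof of this corollary, but the identical decomposition $t=n\tau+s$ together with continuous dependence on initial data over $[0,\tau]$ is carried out in Lemma~\ref{two-tau-periodic-solutions} for the extended system, and your argument is simply its restriction to the diagonal. Your write-up is in fact slightly more explicit, since you name Gronwall for the uniform estimate where the paper only asserts uniform convergence.
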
 

\section{Proof of the main results}\label{section3}
Throughout this section, we always assume that $\mathrm{(\mathbf{A1})}$-$\mathrm{(\mathbf{A4}) }$ hold. In order to prove our main results, we need the following Lemmas.

\begin{lem}\label{monotone-system}
The Poincar\'e map $\tilde{T}$  for \eqref{f(t,x,h(y)),f(t,y,h(x))} generates a discrete-time monotone system on $X \times X $ with respect to $\leqslant_C $.		
\end{lem}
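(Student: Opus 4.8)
The plan is to deduce the monotonicity of $\tilde{T}$ from a quasimonotonicity property of the vector field driving \eqref{f(t,x,h(y)),f(t,y,h(x))}, and then to invoke the classical comparison principle for quasimonotone systems (cf. \cite{Monotone-1995}). Writing $G(t,x,y)=\bigl(f(t,x,h(y)),\,f(t,y,h(x))\bigr)$ for the right-hand side, it suffices to show that $G$ is quasimonotone on $X\times X$ with respect to the cone $C=K\times(-K)$: namely, whenever $(x,y)\leqslant_C(\bar{x},\bar{y})$ and $\ell\in C^*$ satisfies $\ell(x,y)=\ell(\bar{x},\bar{y})$, one has $\ell\bigl(G(t,x,y)\bigr)\leqslant\ell\bigl(G(t,\bar{x},\bar{y})\bigr)$ for every $t\geqslant 0$. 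Once this is verified, the flow $\phi(t,t_0,\cdot)$ of \eqref{f(t,x,h(y)),f(t,y,h(x))} is order-preserving for $\leqslant_C$; specializing to $t=\tau$ and $t_0=0$ then yields the monotonicity of $\tilde{T}=\phi(\tau,0,\cdot)$.

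Next I would unpack the dual cone. Since $(-K)^*=-K^*$, every $\ell\in C^*=K^*\times(-K)^*$ has the form $\ell=(\lambda,-\mu)$ with $\lambda,\mu\in K^*$ and $\ell(x,y)=\lambda(x)-\mu(y)$. Assume $(x,y)\leqslant_C(\bar{x},\bar{y})$, i.e.\ $x\leqslant_K\bar{x}$ and $\bar{y}\leqslant_K y$, together with $\ell(x,y)=\ell(\bar{x},\bar{y})$. Rearranging the equality gives $\lambda(\bar{x}-x)=\mu(\bar{y}-y)$; here the left-hand side is $\geqslant 0$ because $\bar{x}-x\in K$ and $\lambda\in K^*$, while the right-hand side is $\leqslant 0$ because $\bar{y}-y\in-K$ and $\mu\in K^*$. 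Hence both vanish, producing the two scalar identities $\lambda(x)=\lambda(\bar{x})$ and $\mu(y)=\mu(\bar{y})$, which are precisely what is needed to apply the quasimonotonicity hypothesis $\mathrm{(\mathbf{A1})}$ in each component.

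With these identities at hand, I would estimate the two scalar differences separately. For the first component, split $f(t,\bar{x},h(\bar{y}))-f(t,x,h(y))=\bigl[f(t,\bar{x},h(\bar{y}))-f(t,x,h(\bar{y}))\bigr]+\bigl[f(t,x,h(\bar{y}))-f(t,x,h(y))\bigr]$. Applying $\lambda$, the first bracket is $\geqslant 0$ by $\mathrm{(\mathbf{A1})}$ (with $x\leqslant_K\bar{x}$, $\lambda(x)=\lambda(\bar{x})$, and input $u=h(\bar{y})$), while the second bracket lies in $K$, hence has nonnegative $\lambda$-value, since $\bar{y}\leqslant_K y$ forces $h(y)\leqslant_U h(\bar{y})$ by $\mathrm{(\mathbf{A3})}$ and then $f(t,x,h(y))\leqslant_K f(t,x,h(\bar{y}))$ by $\mathrm{(\mathbf{A2})}$. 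Symmetrically, split $f(t,\bar{y},h(\bar{x}))-f(t,y,h(x))=\bigl[f(t,\bar{y},h(\bar{x}))-f(t,y,h(\bar{x}))\bigr]+\bigl[f(t,y,h(\bar{x}))-f(t,y,h(x))\bigr]$; applying $\mu$, the first bracket is $\leqslant 0$ by $\mathrm{(\mathbf{A1})}$ (with $\bar{y}\leqslant_K y$, $\mu(\bar{y})=\mu(y)$, input $u=h(\bar{x})$), and the second bracket is $\leqslant 0$ because $x\leqslant_K\bar{x}$ gives $h(\bar{x})\leqslant_U h(x)$ by $\mathrm{(\mathbf{A3})}$ and then $f(t,y,h(\bar{x}))\leqslant_K f(t,y,h(x))$ by $\mathrm{(\mathbf{A2})}$. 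Combining, $\ell\bigl(G(t,\bar{x},\bar{y})\bigr)-\ell\bigl(G(t,x,y)\bigr)$ equals the first $\lambda$-difference minus the second $\mu$-difference, hence is $\geqslant 0$, which is exactly quasimonotonicity of $G$ with respect to $C$.

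I expect the only genuine subtlety to be the sign bookkeeping in the reversed second factor $-K$, together with the observation that the boundary equality $\ell(x,y)=\ell(\bar{x},\bar{y})$ must be decoupled into the two independent scalar equalities $\lambda(x)=\lambda(\bar{x})$ and $\mu(y)=\mu(\bar{y})$ before $\mathrm{(\mathbf{A1})}$ can be invoked coordinatewise. The remaining structure of the argument—splitting each difference into a quasimonotone piece governed by $\mathrm{(\mathbf{A1})}$ and an input piece governed by the negative-feedback pair $\mathrm{(\mathbf{A2})}$–$\mathrm{(\mathbf{A3})}$, followed by the comparison principle—is routine.
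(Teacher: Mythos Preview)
Your proposal is correct and follows essentially the same approach as the paper: verify that $G(t,x,y)=(f(t,x,h(y)),f(t,y,h(x)))$ is quasimonotone with respect to $C=K\times(-K)$, decouple the boundary equality $\ell(x,y)=\ell(\bar{x},\bar{y})$ into $\lambda(x)=\lambda(\bar{x})$ and $\mu(y)=\mu(\bar{y})$ via the sign argument, and then estimate each scalar difference using $\mathrm{(\mathbf{A1})}$ for the state variable and $\mathrm{(\mathbf{A2})}$--$\mathrm{(\mathbf{A3})}$ for the input; the only cosmetic difference is that the paper passes through the intermediate point $(f(t,x,h(\bar{y})),f(t,y,h(\bar{x})))$ (input first, then state) whereas you pass through $(f(t,x,h(\bar{y})),f(t,y,h(\bar{x})))$ in the reverse order, which is immaterial.
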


\begin{proof}
	Let $G(t,(x,y))=\left(f(t,x,h(y)),f(t,y,h(x))\right)$. To prove the monotonicity of the Poincar\'e map $\tilde{T}$, one  only needs to show that $G(t,(x,y))$ satisfies  the quasimonotone condition relative to the cone $C$ (see \cite[Section 3, Theorem 3.2]{Monotone-2004}). To this purpose, for any $(x,y) \leqslant_C (\bar{x},\bar{y}) $ and $(\lambda,-\mu) \in C^* $ with $(\lambda,-\mu)(x,y)=(\lambda,-\mu)(\bar{x},\bar{y})$, we must verify that 
	\begin{eqnarray*}
		(\lambda,-\mu)G\left(t,(x,y)\right) \leqslant (\lambda,-\mu)G\left(t,(\bar{x},\bar{y})\right), \, \, \mathrm{for \,\,  any} \,\, t \geqslant0.
	\end{eqnarray*}	
	In fact,  $(\lambda,-\mu)(x,y)=(\lambda,-\mu)(\bar{x},\bar{y})$ and $\lambda,\mu \in K^* $ imply that $$0 \leqslant \lambda(\bar{x}-x)=\mu(\bar{y}-y) \leqslant 0 , $$
and hence, $$\lambda(x)=\lambda(\bar{x}) \,\,\, \mathrm{and}\,\,\, \mu(y)=\mu(\bar{y}) .$$ 
Recall  that $x \leqslant_K \bar{x} $ and  $ \bar{y} \leqslant_K y $. Then, one has  
	\begin{align*}
		(\lambda,-\mu)G\left(t,(x,y)\right)&=\lambda\left(f(t,x,h(y))\right)-\mu\left(f(t,y,h(x))\right) \\
		&\leqslant \lambda\left(f(t,x,h(\bar{y}))\right)-\mu\left(f(t,y,h(\bar{x}))\right) \\
		&\leqslant \lambda\left(f(t,\bar{x},h(\bar{y}))\right)-\mu\left(f(t,\bar{y},h(\bar{x}))\right) \\
		&= (\lambda,-\mu)G\left(t,(\bar{x},\bar{y})\right),
	\end{align*}		
	where the first inequality  is due to the monotonicity of $f$ and $h$ with respect to ($\mathbf{A2}$)-($\mathbf{A3}$),	and the second inequality  follows from ($\mathbf{A1}$).
\end{proof}

\begin{lem}\label{positively-invariant}
	If \eqref{integral-condition1} holds, then the order interval $$I=[(x_0,y_0),(y_0,x_0)]_C=\{(x,y):(x_0,y_0) \leqslant_C (x,y) \leqslant_C (y_0,x_0)\}$$
is positively invariant with respect to   $\tilde{T}$ for system	\eqref{f(t,x,h(y)),f(t,y,h(x))}.
 In particular, 
$ [x_0,y_0]_K $ is positively invariant with respect to   $T$ for system  \eqref{f(t,x,h(x))}.

\end{lem}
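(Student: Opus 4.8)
The plan is to combine the monotonicity of $\tilde{T}$ from Lemma \ref{monotone-system} with two boundary estimates on the corners $(x_0,y_0)$ and $(y_0,x_0)$ that generate $I$, and then squeeze. First I would reinterpret hypothesis \eqref{integral-condition1} in terms of the endpoint of the trajectory $\left(x_0(t),y_0(t)\right)=\phi(t,0,(x_0,y_0))$. Since this trajectory solves \eqref{f(t,x,h(y)),f(t,y,h(x))}, the fundamental theorem of calculus gives
$$x_0(\tau)-x_0 = \int_0^\tau f\left(t,x_0(t),h(y_0(t))\right)\mathrm{d}t, \qquad y_0(\tau)-y_0 = \int_0^\tau f\left(t,y_0(t),h(x_0(t))\right)\mathrm{d}t.$$
Feeding these into \eqref{integral-condition1} yields $x_0 \leqslant_K x_0(\tau)$ and $y_0(\tau) \leqslant_K y_0$. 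Recalling the definition of $\leqslant_C$, these two $K$-inequalities are exactly the single statement $(x_0,y_0) \leqslant_C \tilde{T}(x_0,y_0)$, since $\tilde{T}(x_0,y_0)=\phi(\tau,0,(x_0,y_0))=(x_0(\tau),y_0(\tau))$.

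Next I would handle the upper corner $(y_0,x_0)$ by invoking the symmetry of system \eqref{f(t,x,h(y)),f(t,y,h(x))} under the interchange $x \leftrightarrow y$. By uniqueness of solutions, the orbit through $(y_0,x_0)$ is obtained from the orbit through $(x_0,y_0)$ by swapping the two coordinates, so $\phi(t,0,(y_0,x_0))=(y_0(t),x_0(t))$ and hence $\tilde{T}(y_0,x_0)=(y_0(\tau),x_0(\tau))$. The very same inequalities $x_0 \leqslant_K x_0(\tau)$ and $y_0(\tau) \leqslant_K y_0$ now read, in the $\leqslant_C$ order, as $\tilde{T}(y_0,x_0) \leqslant_C (y_0,x_0)$. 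Thus the lower corner moves up and the upper corner moves down.

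With these two corner estimates in hand, the invariance of $I$ follows by squeezing. For any $(x,y)\in I$ we have $(x_0,y_0) \leqslant_C (x,y) \leqslant_C (y_0,x_0)$, so the monotonicity of $\tilde{T}$ (Lemma \ref{monotone-system}) gives $\tilde{T}(x_0,y_0) \leqslant_C \tilde{T}(x,y) \leqslant_C \tilde{T}(y_0,x_0)$. Chaining with the corner estimates,
$$(x_0,y_0) \leqslant_C \tilde{T}(x_0,y_0) \leqslant_C \tilde{T}(x,y) \leqslant_C \tilde{T}(y_0,x_0) \leqslant_C (y_0,x_0),$$
whence $\tilde{T}(x,y)\in I$ and $\tilde{T}I\subset I$.

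Finally, for the ``in particular'' statement I would restrict to the diagonal. If $x\in[x_0,y_0]_K$ then $x_0\leqslant_K x\leqslant_K y_0$, which unwinds to exactly the condition $(x,x)\in I$. Positive invariance of $I$ together with the diagonal identity $\tilde{T}(x,x)=(Tx,Tx)$ then gives $(Tx,Tx)\in I$, and reading off $\leqslant_C$ on the diagonal yields $x_0\leqslant_K Tx\leqslant_K y_0$, i.e. $Tx\in[x_0,y_0]_K$. I expect the only genuinely delicate step to be the symmetry argument for the upper corner, where one must correctly track the coordinate swap together with the orientation reversal built into the cone $C=K\times(-K)$; the remaining steps are direct applications of monotonicity and the definition of $\leqslant_C$.
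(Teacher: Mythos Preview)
Your proposal is correct and follows essentially the same route as the paper: both arguments compute $\tilde{T}(x_0,y_0)-(x_0,y_0)$ via the fundamental theorem of calculus and read \eqref{integral-condition1} as $(x_0,y_0)\leqslant_C\tilde{T}(x_0,y_0)$, use the $x\leftrightarrow y$ symmetry of \eqref{f(t,x,h(y)),f(t,y,h(x))} to obtain $\tilde{T}(y_0,x_0)\leqslant_C(y_0,x_0)$, squeeze with Lemma~\ref{monotone-system}, and then restrict to the diagonal. The only cosmetic difference is that the paper packages the calculation by writing $a_0=(x_0,y_0)$, $b_0=(y_0,x_0)$ and uses invariance of $I\cap D$, whereas you spell out the equivalence $(x,x)\in I\iff x\in[x_0,y_0]_K$ directly.
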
 

\begin{proof} 
	Note that $x_0 \leqslant_K y_0 $ implies $(x_0,y_0) \leqslant_C (y_0,x_0)$. Denote $a_0=(x_0,y_0)$, $b_0=(y_0,x_0)$.
	By  \eqref{integral-condition1},  we obtain 
\begin{align*}
	\tilde{T}a_0-a_0&= \phi(\tau,0,a_0)-a_0\\
	&=	\int_{0}^{\tau} G\left(t,\phi(t,0,a_0)\right) \mathrm{d}t \\
	&=\left(\int_{0}^{\tau} f\left(t,x_0(t),h(y_0(t))\right) \mathrm{d}t, \int_{0}^{\tau} f\left(t,y_0(t),h(x_0(t))\right) \mathrm{d}t\right)  \geqslant_C \left(0,0\right),
\end{align*}		
	that is, $	\tilde{T}a_0 \geqslant_C a_0$.
	Observe also that system  \eqref{f(t,x,h(y)),f(t,y,h(x))}
	is symmetric. Then it yields that 
	$$\phi(t,0,b_0)=\left(y_0(t),x_0(t)\right), \,\,\,\mathrm{for\,\, any} \,\,\, t \geqslant 0 .$$
	Thus, by repeating the same arguments above, one can obtain 	$\tilde{T}b_0\leqslant_C b_0$. Consequently, for any $z \in I $, it follows from Lemma \ref{monotone-system} that  
	$$a_0 \leqslant_C \tilde{T}a_0 \leqslant_C \tilde{T}z \leqslant_C \tilde{T}b_0 \leqslant_C b_0 ,$$
	which entails that 
	$I $ is positively invariant for $\tilde{T}$.
	Note also that the diagonal 
	$$D=\{(x,x): x \in X\}$$
	is positively invariant for system (\ref{f(t,x,h(y)),f(t,y,h(x))}). 
	Then 
	$I \cap D $ is positively invariant for  $\tilde{T}$.  Since  $\{{\tilde{T}}^n\}_{n \geqslant 0 }$ reduces to  $\{{T}^n\}_{n \geqslant 0 }$ on $D$, we obtain that 
	$[x_0,y_0]_K $ is  positively invariant with respect to $T$.
\end{proof}

\begin{lem}\label{two-equilibria}
The Poincar\'e map $\tilde{T}$ has two fixed points $p,q \in I $ with $p \leqslant_C q $.
\end{lem}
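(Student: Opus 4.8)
The plan is to exploit the monotone structure established in Lemma \ref{monotone-system} together with the sub/super-solution pair that already surfaced inside the proof of Lemma \ref{positively-invariant}, and then to obtain the two fixed points as limits of monotone iterate sequences.

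First I would note that the proof of Lemma \ref{positively-invariant} in fact delivers more than positive invariance of $I$: writing $a_0=(x_0,y_0)$ and $b_0=(y_0,x_0)$, it shows $a_0 \leqslant_C \tilde{T}a_0$ and $\tilde{T}b_0 \leqslant_C b_0$. Applying the monotone map $\tilde{T}$ to the first inequality and iterating, Lemma \ref{monotone-system} yields
\[
a_0 \leqslant_C \tilde{T}a_0 \leqslant_C \tilde{T}^2 a_0 \leqslant_C \cdots,
\]
so that $\{\tilde{T}^n a_0\}_{n \geqslant 0}$ is a $\leqslant_C$-increasing sequence; by the symmetry of the situation $\{\tilde{T}^n b_0\}_{n \geqslant 0}$ is $\leqslant_C$-decreasing. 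Positive invariance of $I$ confines both sequences to the order interval $I=[a_0,b_0]_C$.

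Next I would pass to the limit. Since order intervals in the finite-dimensional space $\mathbb{R}^n \times \mathbb{R}^n$ are bounded and $C$ is closed, $I$ is compact; and because every cone in a finite-dimensional space is normal, a $\leqslant_C$-monotone sequence lying in $I$ must converge. Hence there exist $p,q \in I$ with $\tilde{T}^n a_0 \to p$ and $\tilde{T}^n b_0 \to q$. Letting $n \to \infty$ in $\tilde{T}\big(\tilde{T}^n a_0\big)=\tilde{T}^{n+1}a_0$ and invoking the continuity of the Poincar\'e map $\tilde{T}$ gives $\tilde{T}p=p$ and $\tilde{T}q=q$, so both are fixed points of $\tilde{T}$.

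Finally, the ordering $p \leqslant_C q$ follows because $a_0 \leqslant_C b_0$ forces $\tilde{T}^n a_0 \leqslant_C \tilde{T}^n b_0$ for every $n$ by Lemma \ref{monotone-system}, and closedness of $C$ preserves this relation in the limit. The only genuinely delicate point is the convergence of the monotone bounded sequences: this rests on the normality of finite-dimensional cones, equivalently on compactness of $I$ combined with the observation that any subsequential limit is an upper (resp.\ lower) bound for the whole monotone sequence, which then upgrades clustering into full convergence. I expect this passage --- justifying convergence rather than the mere existence of cluster points --- to be the main technical obstacle, while the remaining verifications are immediate consequences of monotonicity and continuity.
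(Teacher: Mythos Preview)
Your proposal is correct and follows essentially the same route as the paper: starting from $a_0\leqslant_C\tilde{T}a_0$ and $\tilde{T}b_0\leqslant_C b_0$ (extracted from the proof of Lemma~\ref{positively-invariant}), iterating to obtain monotone sequences inside the compact order interval $I$, and passing to the limit to produce the ordered fixed points $p\leqslant_C q$. The only cosmetic difference is that the paper dispatches the convergence step by invoking a ready-made monotone convergence criterion (\cite[Section~5, Lemma~5.3]{Monotone-2004}), whereas you unpack that criterion by hand via normality of finite-dimensional cones and compactness of $I$.
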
 

\begin{proof}
Given any $z\in I $, one has $a_0 \leqslant_C z \leqslant_C b_0 $. By virtue of  Lemmas \ref{monotone-system}-\ref{positively-invariant}, we  obtain 
	\begin{eqnarray*}\label{tilde{T}-bar{z}}
		a_0 \leqslant_C \tilde{T}a_0 \leqslant_C \tilde{T}z\leqslant_C \tilde{T}b_0 \leqslant_C b_0 .
	\end{eqnarray*}
	Observe that $O_{ \tilde{T}}(a_0), \,O_ {\tilde{T}}(b_0)\subset I $ have compact closure, then there exist $p,q \in I $ such that $\omega_{\tilde{T}}(a_0)=\{p\} $ and $\omega_{\tilde{T}}(b_0)=\{q\} $ by using monotone convergence  criterion (see \cite[Section 5, Lemma 5.3]{Monotone-2004}). Thus,
	\begin{eqnarray*}\label{tilde{T}^n-a_0}
		\tilde{T}^n a_0 \to p ,\; \tilde{T}^n b_0 \to q , \,\,\,\,  \mathrm{as} \,\, n \to \infty .
	\end{eqnarray*}
	Clearly, $p$ and $q $ are two fixed points of $\tilde{T}$ and $p \leqslant_C q $.
\end{proof}

\begin{figure}[H]
	\centering
	\includegraphics[scale=0.30]{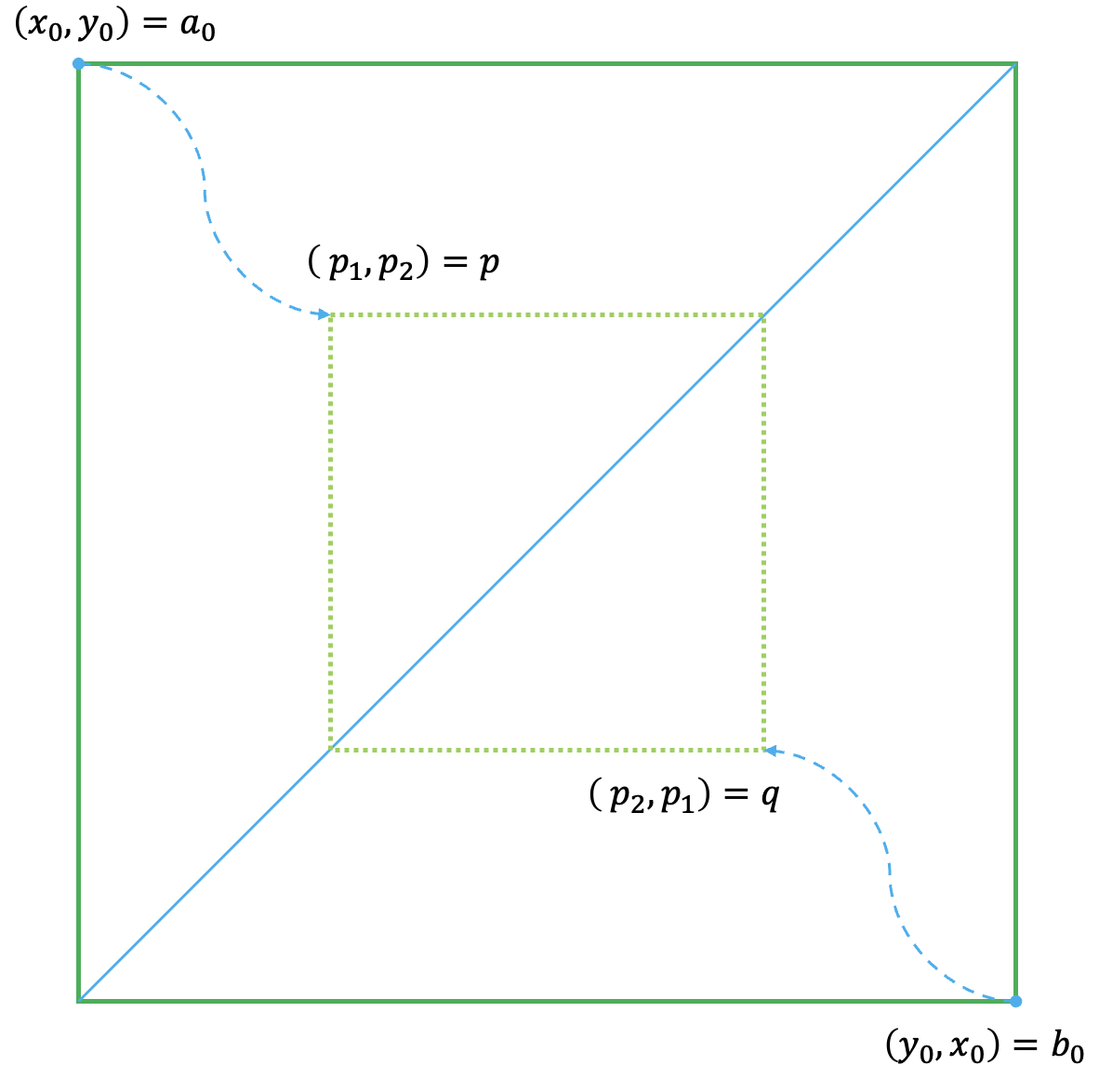}
	\caption{Orbit convergence: $\tilde{T}^n a_0 \uparrow p $ and $\tilde{T}^n b_0 \downarrow q$ with $\omega_{\tilde{T}} \left([a_0,b_0]_C\right)$ remained in the  order interval $[a_0,b_0]_C$.} 
\end{figure}

An immediate consequence of Lemma \ref{two-equilibria} is the following result.
\begin{lem}\label{two-tau-periodic-solutions}
System \eqref{f(t,x,h(y)),f(t,y,h(x))} possesses two $\tau$-periodic solutions $p(t)=(p_1(t),p_2(t))$, $q(t)=(p_2(t),p_1(t))$ satisfying $p(0)=p $, $q(0)=q $ and $p(t) \leqslant_C q(t) $, for any $ t \geqslant 0 $. 
Moreover,
$$\lVert \phi(t,0,a_0)-p(t)\lVert\ \rightarrow 0  \,\,\, \mathrm{and} \,\,\, \, \lVert \phi(t,0,b_0)-q(t)\lVert\ \rightarrow 0 , \, \,\,\, \mathrm{as} \,\, t \to \infty .$$
\end{lem}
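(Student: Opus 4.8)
The plan is to convert the discrete statements of Lemma \ref{two-equilibria} into continuous-time ones by reading the fixed points of $\tilde{T}$ as initial values of $\tau$-periodic solutions. First I would set $p(t)=\phi(t,0,p)$ and $q(t)=\phi(t,0,q)$. Since $\tilde{T}p=p$ means $\phi(\tau,0,p)=p$, the $\tau$-periodicity of the vector field $G(t,\cdot)$ forces $p(t+\tau)=p(t)$ for all $t\geqslant 0$; likewise $q(t)$ is $\tau$-periodic. Writing $p=(p_1,p_2)$ and hence $p(t)=(p_1(t),p_2(t))$, the remaining structural point is to produce the specific flip form $q(t)=(p_2(t),p_1(t))$.

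For the flip form I would exploit the symmetry already invoked in Lemma \ref{positively-invariant}: the reflection $R(x,y)=(y,x)$ satisfies $\phi(t,0,Rz)=R\phi(t,0,z)$, and therefore $\tilde{T}R=R\tilde{T}$ and $\tilde{T}^nR=R\tilde{T}^n$. Since $b_0=Ra_0$, applying $\tilde{T}^n$ gives $\tilde{T}^n b_0=R\tilde{T}^n a_0\to Rp$; comparing with $\tilde{T}^n b_0\to q$ from Lemma \ref{two-equilibria} yields $q=Rp$, so $q=(p_2,p_1)$ and $q(t)=Rp(t)=(p_2(t),p_1(t))$ as claimed. The inequality $p(t)\leqslant_C q(t)$ for all $t\geqslant 0$ then follows because $p\leqslant_C q$ and, by the quasimonotonicity established in Lemma \ref{monotone-system}, the time-$t$ flow $\phi(t,0,\cdot)$ preserves $\leqslant_C$.

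To upgrade the discrete convergence to the continuous-time statement, I would write $t=n\tau+s$ with $s\in[0,\tau]$ and use the $\tau$-periodicity of the system in the form $\phi(n\tau+s,0,a_0)=\phi(s,0,\tilde{T}^n a_0)$, together with $p(t)=p(s)=\phi(s,0,p)$. Then
$$\lVert \phi(t,0,a_0)-p(t)\rVert=\lVert \phi(s,0,\tilde{T}^n a_0)-\phi(s,0,p)\rVert ,$$
and continuous dependence on initial data, uniform over the compact interval $s\in[0,\tau]$, makes the right-hand side tend to $0$ as $\tilde{T}^n a_0\to p$, i.e.\ as $n\to\infty$. The same argument with $b_0$ and $q$ closes the proof.

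I expect the main obstacle to be the step asserting $q=Rp$: it is the one place where the mere discrete convergence is not enough, and one must combine the symmetry of the extended system with the uniqueness of the limits $p,q$ furnished by the monotone convergence criterion. The continuous-time upgrade is conceptually routine but relies on the uniform continuous dependence on initial conditions over $[0,\tau]$, which I would invoke rather than re-derive.
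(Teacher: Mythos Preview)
Your proposal is correct and follows essentially the same approach as the paper: define $p(t)=\phi(t,0,p)$, $q(t)=\phi(t,0,q)$, use the symmetry of system \eqref{f(t,x,h(y)),f(t,y,h(x))} to get the flip form $q(t)=(p_2(t),p_1(t))$, invoke Lemma \ref{monotone-system} for $p(t)\leqslant_C q(t)$, and upgrade the discrete convergence via the decomposition $t=n\tau+s$ together with uniform continuous dependence on $[0,\tau]$. Your derivation of $q=Rp$ from $b_0=Ra_0$ and $\tilde{T}R=R\tilde{T}$ is in fact more explicit than the paper's, which simply asserts that ``the symmetry of system \eqref{f(t,x,h(y)),f(t,y,h(x))} implies $q(t)=(p_2(t),p_1(t))$'' without spelling out why $q=Rp$ at the level of initial data.
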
 

\begin{proof}
By Lemma \ref{two-equilibria}, 
$p$ and $q $ are two fixed points of $\tilde{T}$ satisfying  $p \leqslant_C q $.
Then 
$$p(t)=\phi(t,0,p)\,\,\, \mathrm{and} \,\,\, q(t)=\phi(t,0,q)$$ 
are two $\tau$-periodic solutions of the system \eqref{f(t,x,h(y)),f(t,y,h(x))}. 
	We will show 
$$\lVert \phi(t,0,a_0)-p(t)\lVert\ \rightarrow 0 , \, \, \, \mathrm{as}\,\,  t \to \infty .$$ The proof for $q(t)$ is analogous. 
	Since $\tilde{T}^n a_0 \to p $,  one has 
\begin{equation}\label{phi(t,t_0,tilde{T}^n a_0)-p(t)}
	\begin{aligned}
		\| \phi(t,0,\tilde{T}^n a_0)-p(t)\| &=
		\lVert \phi\left(t,0,\phi(n\tau,0,a_0) \right)-p(t)\lVert\  \\
		&=	\lVert \phi\left(t+n\tau,n\tau,\phi(n\tau,0,a_0)\right)-p(t)\lVert\  \\
		&=
		\lVert \phi(t+n\tau,0,a_0)-p(t)\lVert\ \to 0, 
	\end{aligned}
\end{equation}
	as $ n \to \infty $ uniformly  for $t \in [0,\tau]$.
	For any sequence $t_n \to \infty $, we write $t_n =m_n \tau+s_n $, where  $ m_n $ are nonnegative integers and $ s_n \in [0,\tau)$.
Then
\begin{equation*}
	\begin{aligned}
		\| \phi(t_n,0, a_0)-\phi(t_n,0, p)\| &=
		\lVert \phi(m_n\tau+s_n,0,a_0)-\phi(m_n\tau+s_n,0, p)\lVert\  \\
		&=	\lVert \phi(m_n\tau+s_n,0,a_0)-\phi(s_n,0, p)\lVert.
	\end{aligned}
\end{equation*}
	Together with  \eqref{phi(t,t_0,tilde{T}^n a_0)-p(t)}, we obtain that 
\begin{eqnarray*}
	\| \phi(t,0,a_0)-p(t)\| =\| \phi(t,0,a_0)-\phi(t,0,p)\| \rightarrow 0 , 
\end{eqnarray*}
as $ n \to \infty $.
Similarly, 
\begin{eqnarray*}
	\lVert \phi(t,0,b_0)-q(t)\lVert\  \rightarrow 0 , \, \,\,\, \mathrm{as} \,\,  t \to \infty .
\end{eqnarray*}
	Finally, donote $$p(t)=(p_1(t),p_2(t)) .$$
	 Then the symmetry of system \eqref{f(t,x,h(y)),f(t,y,h(x))} implies that 
	$$q(t)=(p_2(t),p_1(t)).$$
	Again, by Lemma \ref{monotone-system}, one has
\begin{eqnarray*} 
	p(t)=\phi(t,0,p) \leqslant_C \phi(t,0,q)=q(t),\, \,\, \mathrm{for\, \, any} \,\,\, t \geqslant 0 .
\end{eqnarray*}
	Thus, we have completed the proof.
\end{proof}	 

Now, we are ready to  prove Theorem \ref{global-convergence}.

\begin{proof}[Proof of Theorem \ref{global-convergence}]
	By Lemma \ref{two-tau-periodic-solutions}, $p(t)=(p_1(t),p_2(t))$, $q(t)=(p_2(t),p_1(t)) $ are two $\tau$-periodic solutions of system \eqref{f(t,x,h(y)),f(t,y,h(x))} with  $p_1(t) \leqslant_K p_2(t) ,\,\, \mathrm{for \,\, any }\,\, t \geqslant 0 $. Then $p_1(t)$, $p_2(t)$ are two $\tau$-periodic continuous  functions and satisfy 
\begin{equation*}\label{f(t,p_1(t),h-circ-p_2(t))}
		\left\lbrace
		\begin{aligned}
			\dot{p}_1(t)&=f\left(t,p_1(t),h (p_2(t) )\right), \\			
			\dot{p}_2(t)&=f\left(t,p_2(t),h (p_1(t)) \right).
		\end{aligned}
		\right.
\end{equation*} 
	Therefore, we obtain
	$$0=p_1(\tau)-p_1(0)=\int_{0}^{\tau} f\left(t,p_1(t),h (p_2(t)) \right) \mathrm{d}t ,$$
	and
	$$0=p_2(\tau)-p_2(0)=\int_{0}^{\tau} f\left(t,p_2(t),h (p_1(t) )\right) \mathrm{d}t .$$
	By virtue of  \eqref{integral-condition2} in  $\mathrm{(\mathbf{A4}) }$, there holds
	$$p_1(t)=p_2(t),\,\, \mathrm{for \,\, any} \,\,  t \geqslant 0 .$$  
	Hence,
	$$p=(p_1,p_2)=(p_2,p_1)=q  \overset{\triangle }{=}(r,r).$$ 
	Due to  Lemma \ref{two-equilibria}, it is clear  that  $p (=q)$ is the unique fixed point of  $\tilde{T}$  in $I$.
	Therefore, 	we can obtain  
    $$\omega_{\tilde{T}}\left(I\right)=\omega_{\tilde{T}}\left([a_0,b_0]_C\right)=\{p\} \left(=\{q\}\right).$$
   This implies that $r $ is the unique fixed point of $T$ in $ [x_0,y_0]_K $ and
   	$$ \omega_{T}\left([x_0,y_0]_K\right)=\{r\}.$$
Thus, we have proved Theorem \ref{global-convergence}.
\end{proof}

\section{Application to time-periodic gene regulatory models}$\ $
\label{section4}
In this section, we explore the dynamics of  time periodically-forced  gene regulatory models. Concretely, we prove the existence of periodic solutions and global convergence. In addition,  we illustrate the consistence of theoretical and actual results by numerical simulation.

A prototypical example of  Section \ref{section2}, treated in \cite{cyclicgene-1987}, is the gene regulatory system modeled by the equations
\begin{equation}\label{g(xn)-alpha1x1}
	\left\lbrace
	\begin{aligned}
		\dot{x}_1&=g(x_n)-\alpha_1 x_1 ,&\\
		\dot{x}_i&=x_{i-1}-\alpha_i x_i,& \, 2 \leqslant i \leqslant n-1 ,\\
		\dot{x}_n&=x_{n-1} -\alpha_n(t)x_n,&
	\end{aligned}
	\right.
\end{equation} 
on $ \mathbb{R}_+ ^n$, where $\alpha_i >0 $ for any $i = 1, \cdots , n-1 $,  and the $\tau$-periodic  function $\alpha_n(t)=\alpha_n(t+\tau) >0 $ is continuous, and $g:\mathbb{R}_+ \to \mathbb{R}_+  $ is  continuously differentiable and satisfies $g(0) >0 $ and $g' < 0 $.

Let $x=\left(x_1,x_2,\cdots,x_n\right)^T $, $h(x)\overset{\triangle }{=}g(x_n) $ and $ \tilde{h}(x)\overset{\triangle }{=}\left(g(x_n),0,\cdots,0\right)^T $, here $T$ means  the transpose. Then system \eqref{g(xn)-alpha1x1} could be rewritten as 
\begin{eqnarray*}\label{A(t)x+tilde{h}(x)}
	\dot{x}=A(t)\,x +\tilde{h}(x) = f(t,x,h(x)),
\end{eqnarray*}
where $A(t)=(a_{ij}(t))_{n \times n }$ is a quasipositive matrix  (i.e. $a_{ij}(t) \geqslant 0 $, $i \neq j $, $t \geqslant 0 $). 
With these hypotheses, the time-dependent  vector field  $f$ naturally satisfies assumptions 
($\mathbf{A1}$)-($\mathbf{A3}$). Take the cone $ K=\mathbb{R}_+ ^n $.
Let
\begin{eqnarray}\label{box-X=[x_0,y_0]}
 X\overset{\triangle }{=}\left[0,g(0)\left(\alpha_1^{-1},\alpha_1^{-1}\alpha_2^{-1},\cdots,\alpha_1^{-1}\alpha_2^{-1} \cdots \alpha_n^{-1}\right)^T\right]_K 
\end{eqnarray}
and introduce the assumption
$$     \max\left\{-g'(u): 0 \leqslant u \leqslant  \alpha ^{-1} g(0) \right\} < \alpha ,  \,\,\,\,\,  \mathrm{where} \,\,\,\,  \alpha=  \prod_{i=1}^{n} \alpha_i   \,\,\,\,  \mathrm{and}  \,\,\,\,\, \alpha_{n}=\min_{0 \leqslant t \leqslant \tau }\alpha_{n}(t). \leqno(\mathbf{H})$$
By applying Corollary \ref{global-convergence-system} to the gene regulatory model \eqref{g(xn)-alpha1x1}, we  have the following 
\begin{thm}\label{gene-regulatory} Assume $\mathbf{(H)}$
	holds.  Then system \eqref{g(xn)-alpha1x1} has a unique $\tau$-periodic solution $r(t) $ in X  such that for any $\bar{x} \in X $,
	$$\lVert \psi(t,0,\bar{x})-r(t)\lVert\ \rightarrow 0 , \, \,\,\, \mathrm{as} \,\, t \to \infty  , $$
	where $\psi(t,0,\bar{x})$ is the solution of system \eqref{g(xn)-alpha1x1} satisfying $\psi(0,0,\bar{x})=\bar{x}$. 
\end{thm}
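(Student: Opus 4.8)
The plan is to apply Corollary~\ref{global-convergence-system} to \eqref{g(xn)-alpha1x1}, written as $\dot x=f(t,x,h(x))$ with $K=\mathbb R_+^n$, input $h(x)=g(x_n)$ and $f(t,x,u)=A(t)x+(u,0,\dots,0)^T$; the entire task then reduces to verifying $(\mathbf{A1})$--$(\mathbf{A4})$ on the box $X$ of \eqref{box-X=[x_0,y_0]} with the choice $x_0=0$ and $y_0=g(0)\bigl(\alpha_1^{-1},\alpha_1^{-1}\alpha_2^{-1},\dots,\alpha_1^{-1}\cdots\alpha_n^{-1}\bigr)^T$. Assumptions $(\mathbf{A1})$--$(\mathbf{A3})$ are structural and I would dispatch them quickly: for fixed $u$ the map $x\mapsto f(t,x,u)$ is affine with quasipositive matrix $A(t)$, so the quasimonotone (Kamke) condition relative to $\mathbb R_+^n$ holds; the input enters only additively in the first coordinate, giving monotonicity in $u$; and $h$ is decreasing because $g'<0$.

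For the first half of $(\mathbf{A4})$, i.e.\ the differential inequality \eqref{integral-condition1}, I would argue by a two-step comparison along the solution $(x_0(t),y_0(t))=\phi(t,0,(x_0,y_0))$ of the symmetric system \eqref{f(t,x,h(y)),f(t,y,h(x))}. First, since $g\ge 0$ one has $f(t,0,h(y_0(t)))=(g((y_0(t))_n),0,\dots,0)^T\ge_K 0$, so the constant $0$ is a subsolution of the $x$-equation and cooperativity gives $x_0(t)\ge_K 0$ for all $t$. Second, this forces $h(x_0(t))=g((x_0(t))_n)\le g(0)$, and a direct substitution using the telescoping definition of $y_0$ together with $\alpha_n(t)\ge\alpha_n$ shows $f(t,y_0,g(0))\le_K 0$; hence by $(\mathbf{A2})$ the constant $y_0$ is a supersolution of the $y$-equation and $y_0(t)\le_K y_0$. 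Integrating the identities $x_0(\tau)-x_0=\int_0^\tau f(t,x_0(t),h(y_0(t)))\,\mathrm dt$ and $y_0(\tau)-y_0=\int_0^\tau f(t,y_0(t),h(x_0(t)))\,\mathrm dt$ then yields \eqref{integral-condition1}.

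The crux is the amenable condition \eqref{integral-condition2}, and this is exactly where $(\mathbf{H})$ is used. Given $\tau$-periodic $a(\cdot)\le_K b(\cdot)$ in $C([0,\tau],X)$ with both integrals vanishing, I would write the $2n$ scalar equations for $\bar c_i:=\int_0^\tau c_i(t)\,\mathrm dt$; the middle equations telescope to $\bar a_1=\alpha_2\cdots\alpha_{n-1}\bar a_{n-1}$ with $\bar a_{n-1}=\int_0^\tau\alpha_n(t)a_n(t)\,\mathrm dt$, and likewise for $b$, while the first-coordinate equations read $\int_0^\tau g(b_n)=\alpha_1\bar a_1$ and $\int_0^\tau g(a_n)=\alpha_1\bar b_1$. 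Eliminating the $\bar a_i,\bar b_i$ produces
$$\alpha\int_0^\tau (b_n-a_n)\,\mathrm dt\;\le\;\int_0^\tau\bigl(g(a_n)-g(b_n)\bigr)\,\mathrm dt\;\le\;M\int_0^\tau (b_n-a_n)\,\mathrm dt,$$
where $M=\max\{-g'(u):0\le u\le\alpha^{-1}g(0)\}$, the lower bound using $\alpha_n(t)\ge\alpha_n$ and the upper bound using the mean value theorem together with $a_n(t),b_n(t)\in[0,\alpha^{-1}g(0)]$ on $X$. Since $(\mathbf{H})$ gives $M<\alpha$ and $b_n-a_n\ge 0$, this collapses to $\int_0^\tau(b_n-a_n)\,\mathrm dt=0$, hence $a_n\equiv b_n$ by continuity; propagating this equality back through the first-coordinate and telescoping relations forces $\bar a_i=\bar b_i$ for every $i$, and nonnegativity of $b_i-a_i$ upgrades each to $a_i\equiv b_i$. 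With $(\mathbf{A1})$--$(\mathbf{A4})$ verified, Corollary~\ref{global-convergence-system} supplies the unique $\tau$-periodic solution $r(t)$ in $X$ together with the global convergence. The main obstacle is precisely the derivation of the sandwiched inequality above and the bookkeeping that turns the scalar identity $a_n\equiv b_n$ into equality in all coordinates; the rest is routine comparison theory.
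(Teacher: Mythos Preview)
Your proposal is correct and follows the paper's overall route: cast \eqref{g(xn)-alpha1x1} as $\dot x=f(t,x,h(x))$, check $(\mathbf{A1})$--$(\mathbf{A4})$ on the box $X=[x_0,y_0]_K$, and invoke Corollary~\ref{global-convergence-system}. The structural verifications and the argument for \eqref{integral-condition1} match the paper's (the paper is in fact terser here, simply writing $x_0(\tau)-x_0\ge 0$ and $y_0(\tau)-y_0\le 0$ after showing $X$ is forward invariant; your two-step comparison makes the same point).

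The only genuine difference is in the verification of \eqref{integral-condition2}. After deriving the telescoped identities
\[
\int_0^\tau g(b_n)\,\mathrm dt=\prod_{i=1}^{n-1}\alpha_i\int_0^\tau\alpha_n(t)a_n\,\mathrm dt,\qquad
\int_0^\tau g(a_n)\,\mathrm dt=\prod_{i=1}^{n-1}\alpha_i\int_0^\tau\alpha_n(t)b_n\,\mathrm dt,
\]
the paper \emph{adds} them to obtain $J(a_n)=J(b_n)$ for the functional $J(\xi)=\int_0^\tau\bigl[g(\xi)+\prod_{i<n}\alpha_i\cdot\alpha_n(t)\xi\bigr]\,\mathrm dt$, then computes the Fr\'echet derivative and uses $(\mathbf H)$ to see that $DJ$ is strictly positive along the segment, forcing $a_n\equiv b_n$. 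You instead \emph{subtract} and sandwich, obtaining $(\alpha-M)\int_0^\tau(b_n-a_n)\,\mathrm dt\le 0$ directly from the mean value theorem and $\alpha_n(t)\ge\alpha_n$. The two arguments are equivalent in content; yours is a bit more elementary since it avoids the functional-derivative language. You also spell out the back-propagation from $a_n\equiv b_n$ to $a_i\equiv b_i$ for all $i$ (via $\bar a_i=\bar b_i$ and $a_i\le b_i$), a step the paper leaves implicit.
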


\begin{proof}
We first  show that the solution $\psi(t,0,\bar{x})$ remains  in $X $ (see \eqref{box-X=[x_0,y_0]}), for any $\bar{x} \in X $ and $t \geqslant 0 $. Write $$\psi(t,0,\bar{x})=\left(x_1(t),x_2(t),\cdots,x_n(t)\right)\,\, \, \mathrm{and} \,\,\, \, \bar{x}=\left(\bar{x}_1,\bar{x}_2,\cdots,\bar{x}_n \right).$$  
	It is easy to see that $\psi(t,0,\bar{x}) \in \mathbb{R}_+ ^n $, whenever  $\bar{x} \in \mathbb{R}_+ ^n $. 
	Since  $g(x_n(t))-\alpha_1 x_1  \leqslant g(0)-\alpha_1 x_1 $, the standard comparison theorem (see \cite[Chapter 1, Theorem 1.4.1]{Differential-and-Integral-Inequalities-1969}) implies that  
	$$x_1(t) \leqslant e^{-\alpha_1t}\bar{x}_1+g(0)\alpha_1^{-1}\left(1-e^{-\alpha_1t}\right) \leqslant g(0)\alpha_1^{-1}.$$
	Similarly, one has 
	$$x_{i}(t) \leqslant  g(0)\alpha_1^{-1}\alpha_2^{-1}\cdots \alpha_{i}^{-1},\,\,\,\, \mathrm{for } \,\,\,  2 \leqslant i \leqslant n-1. $$
	Noticing that $x_{n-1}(t) -\alpha_n(t)x_n \leqslant g(0)\alpha_1^{-1}\alpha_2^{-1}\cdots \alpha_{n-1}^{-1} -\alpha_n x_n $, we  obtain 
	$$ x_{n}(t) 
	\leqslant g(0)\alpha_1^{-1}\alpha_2^{-1}\cdots \alpha_{n}^{-1}.$$
	Thus, $\psi(t,0,\bar{x})$ remains  in $X $  for any  $t \geqslant 0 $. 
	
	Next, we verify that system  \eqref{g(xn)-alpha1x1} satisfies  \eqref{integral-condition1} in  ($\mathbf{A4}$). Let
	$$x_0=0   , \, \,  y_0= g(0)\left(\alpha_1^{-1},\alpha_1^{-1}\alpha_2^{-1},\cdots,\alpha_1^{-1}\alpha_2^{-1} \cdots \alpha_n^{-1}\right)^T  \,\, \mathrm{in} \,\,\, X  \,\,\, \mathrm{and} \,\,\,  U=[0,g(0)] \,\,\,\,  \mathrm{in} \,\,\, \,  \mathbb{R}.$$
	Then, system \eqref{g(xn)-alpha1x1} can be imbedded in the extended symmetric system

		\begin{equation}\label{g(xn)-alpha1x1-imbedded}
			\begin{array}{cc}
			\left\lbrace
			\begin{aligned}
				\dot{x}_1&=g(y_n)-\alpha_1 x_1 ,\\
				\dot{x}_i&=x_{i-1}-\alpha_i x_i,\\
				\dot{x}_n&=x_{n-1} -\alpha_n(t)x_n,\\
				\dot{y}_1&=g(x_n)-\alpha_1 y_1 ,\\
				\dot{y}_i&=y_{i-1}-\alpha_i y_i, \\
				\dot{y}_n&=y_{n-1} -\alpha_n(t)y_n.
			\end{aligned}
			\right.
			& 2\leqslant i \leqslant n-1,
			\end{array}
		\end{equation}
		Let
		$ \left(x_0(t),y_0(t)\right) =\phi(t,0,(x_0,y_0))$
		denote the solution of system \eqref{g(xn)-alpha1x1-imbedded}. Then, one has
		$$	\int_{0}^{\tau} f\left(t,y_0(t),h(x_0(t))\right) \mathrm{d}t
		=y_0(\tau) -y_0(0) = y_0(\tau) -y_0 \leqslant 0  ,$$
		and 
		$$ \int_{0}^{\tau} f\left(t,x_0(t),h(y_0(t))\right) \mathrm{d}t 
		=x_0(\tau) -x_0(0) = x_0(\tau) -x_0 \geqslant 0 .$$
		This verifies \eqref{integral-condition1} in  ($\mathbf{A4}$).
		
		Finally, we prove that system  \eqref{g(xn)-alpha1x1}   satisfies   \eqref{integral-condition2} in ($\mathbf{A4}$). Let
		$a(\cdot),b(\cdot)  $ be two $\tau$-periodic continuous functions taking values in $X$ with $a(\cdot)\leqslant_K b(\cdot) $ and 
		\begin{eqnarray}\label{average-integral-condition}
			\int_{0}^{\tau} f\left(t,a(t),h(b(t))\right) \mathrm{d} t=0=\int_{0}^{\tau} f\left(t,b(t),h(a(t)\right)) \mathrm{d} t.
		\end{eqnarray}
		We will show  that 
		$a(t)=b(t)$ for any $ t \in [0,\tau] $. 
		To this end, we write 
		$$ a(t)=\left(a_1(t),a_2(t),\cdots,a_n(t)\right)^T\,\,\,\,  \mathrm{and} \,\,\, \,  b(t)=\left(b_1(t),b_2(t),\cdots,b_n(t)\right)^T.$$
		Then \eqref{average-integral-condition} implies that 
		\begin{equation*}\label{integral-equality}
			\begin{aligned}
				\displaystyle\int_{0}^{\tau} g(b_n(t))\, \mathrm{d}t&= \alpha_1 \cdot \displaystyle\int_{0}^{\tau} a_1(t)\,\mathrm{d}t ,
				\vspace{0.22cm} \\
				\displaystyle\int_{0}^{\tau} a_i(t)\,\mathrm{d}t&= \alpha_{i+1}  \cdot \displaystyle\int_{0}^{\tau} a_{i+1}(t)\,\mathrm{d}t ,\,\, \,\,1 \leqslant i \leqslant n-2,\\
				\displaystyle\int_{0}^{\tau} a_{n-1}(t)\,\mathrm{d}t&= 	\displaystyle\int_{0}^{\tau} \alpha_{n}(t)a_n(t)\,\mathrm{d}t .
			\end{aligned}
		\end{equation*}
			Hence,  by iterating these equalities, one has 
				$$	\int_{0}^{\tau} g(b_n(t))\, \mathrm{d} t= \prod_{i=1}^{n-1} \alpha_i  \cdot \int_{0}^{\tau} \alpha_{n}(t)a_n(t)\,\mathrm{d}t .$$
				Similarly, one can also obtain  symmetrically
				$$	\int_{0}^{\tau} g(a_n(t))\, \mathrm{d} t=\prod_{i=1}^{n-1} \alpha_i  \cdot \int_{0}^{\tau} \alpha_{n}(t)b_n(t)\,\mathrm{d}t .$$
				Consequently, 
				\begin{equation}\label{equality}
					\begin{aligned}
						\int_{0}^{\tau}  \Big[g(a_n(t))+    \prod_{i=1}^{n-1} \alpha_i  \cdot \alpha_{n}(t)a_n(t)  \Big]\,\mathrm{d}t 
						=	\int_{0}^{\tau}  \Big[g(b_n(t))+ \prod_{i=1}^{n-1} \alpha_i   \cdot \alpha_{n}(t)b_n(t)   \Big]\,\mathrm{d}t .
					\end{aligned}
				\end{equation}
			Now, define the following functional on $C\left([0,\tau],\mathbb{R} \right) $ as 
				$$J: C\left([0,\tau],\mathbb{R} \right) \to \mathbb{R},\, \xi \mapsto 	\int_{0}^{\tau}  \Big[g(\xi (t))+   \prod_{i=1}^{n-1} \alpha_i  \cdot  \alpha_{n}(t)\xi(t) \Big]\,\mathrm{d}t  .$$
		Then, the  Frech\'et derivative of $J$ at $\xi $ is 	
				\begin{equation}\label{Frechet-derivative}
					\begin{aligned}
						\langle DJ(\xi),\varphi \rangle
						=&
						\int_{0}^{\tau} \Big[g'(\xi (t))+ \prod_{i=1}^{n-1} \alpha_i   \cdot \alpha_{n}(t)\Big]\varphi(t)\,\mathrm{d}t .
					\end{aligned}
				\end{equation}			
	Together \eqref{equality}, there holds 
					\begin{align}
						0 \overset{\eqref{equality}}{=\joinrel=}&J(b_n)-J(a_n)  \notag\\
						=\joinrel=&\int_{0}^{1}\langle DJ\left((1-s)a_n+sb_n\right),b_n-a_n \rangle \mathrm{d}s
						\notag \\
						\overset{\eqref{Frechet-derivative}}{=\joinrel=}	&\int_{0}^{1} \int_{0}^{\tau}  \Big[\,g'\left((1-s)a_n(t)+sb_n(t)\right)+ \prod_{i=1}^{n-1} \alpha_i  \cdot \alpha_{n}(t)\, \Big]\left(b_n(t)-a_n(t)\right)\mathrm{d}t \,\mathrm{d}s. 
						\label{contradiction}
					\end{align}
				Suppose $a_n(\cdot) \neq b_n(\cdot)$. Then one can find  an interval $ [c,d] \subset [0,\tau]$ such that 
				$$ a_n(t) < b_n(t) ,\, \,\, \mathrm{for\,\,any} \,\,\, t \in [c,d].$$
				Then the  assumption $\mathbf{(H)}$ entails that 
				\begin{equation*}			
					\int_{c}^d\,  \Big[ \,g'\left((1-s)a_n(t)+sb_n(t)\right)+ \prod_{i=1}^{n-1} \alpha_i \cdot \alpha_{n}(t)\, \Big]\left(b_n(t)-a_n(t)\right)\mathrm{d}t  
					>  0,
				\end{equation*}					
			contradicting \eqref{contradiction}, which completes the proof.
			\end{proof}
			
 Finally, we present some numerical simulations   to illustrate our  main results in system \eqref{g(xn)-alpha1x1}. We take $n=3$,   $\alpha_1=2,\; \alpha_2=1 $, $\alpha_3(t)=2-\frac{4}{5}\sin(\frac{2}{5}\pi t)$ and $g(u)=\frac{2}{1+u}$.
			Let $x_0=(0,0,0)^T$, $y_0=(1,1,\frac{5}{6})^T$ and $ X=[x_0,y_0]_K$. Then, Theorem 	\ref{gene-regulatory} yields that there exists a $5$-periodic solution $r(t) \in X $ such that for any $\bar{x} \in X $,
			$$\lVert \psi(t,0,\bar{x})-r(t)\lVert\ \rightarrow 0 , \, \,\,\,\, \mathrm{as} \,\,\, t \to \infty  , $$
			where $\psi(t,0,\bar{x})$ is the solution of \eqref{g(xn)-alpha1x1} satisfying $\psi(0,0,\bar{x})=\bar{x}$. \\
			Take $\bar{x}=\left(\frac{k}{4},\frac{k}{4},\frac{5k}{24}\right)^T$, for any $k=0,\cdots,4$. Our numerical simulations are illustrated as follows:
			
			\begin{figure}[H]
				\centering
				\begin{minipage}[b]{0.48\textwidth}
					\includegraphics[width=\textwidth]{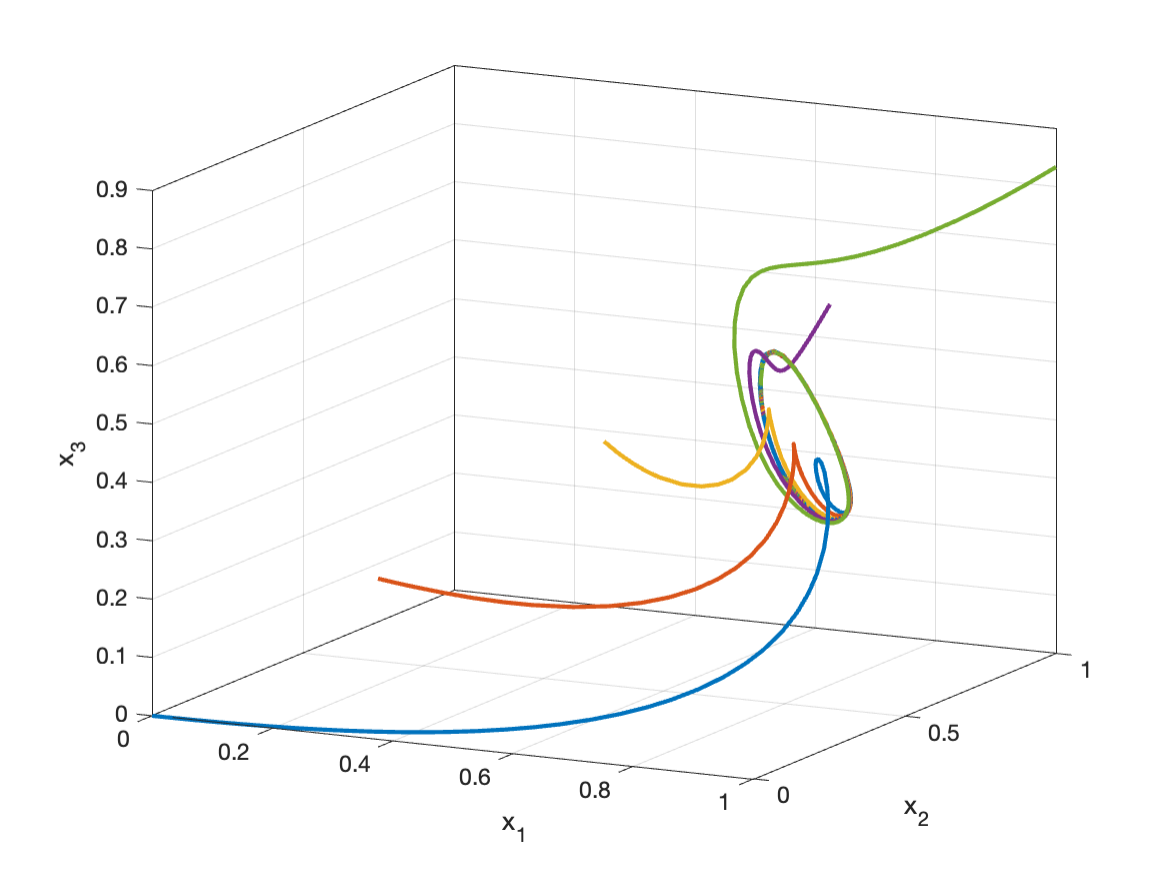}
					\caption{Trajectories of $ \psi(t,0,\bar{x})$.}
				\end{minipage}
				\hfill 
				\begin{minipage}[b]{0.48\textwidth}
					\includegraphics[width=\textwidth]{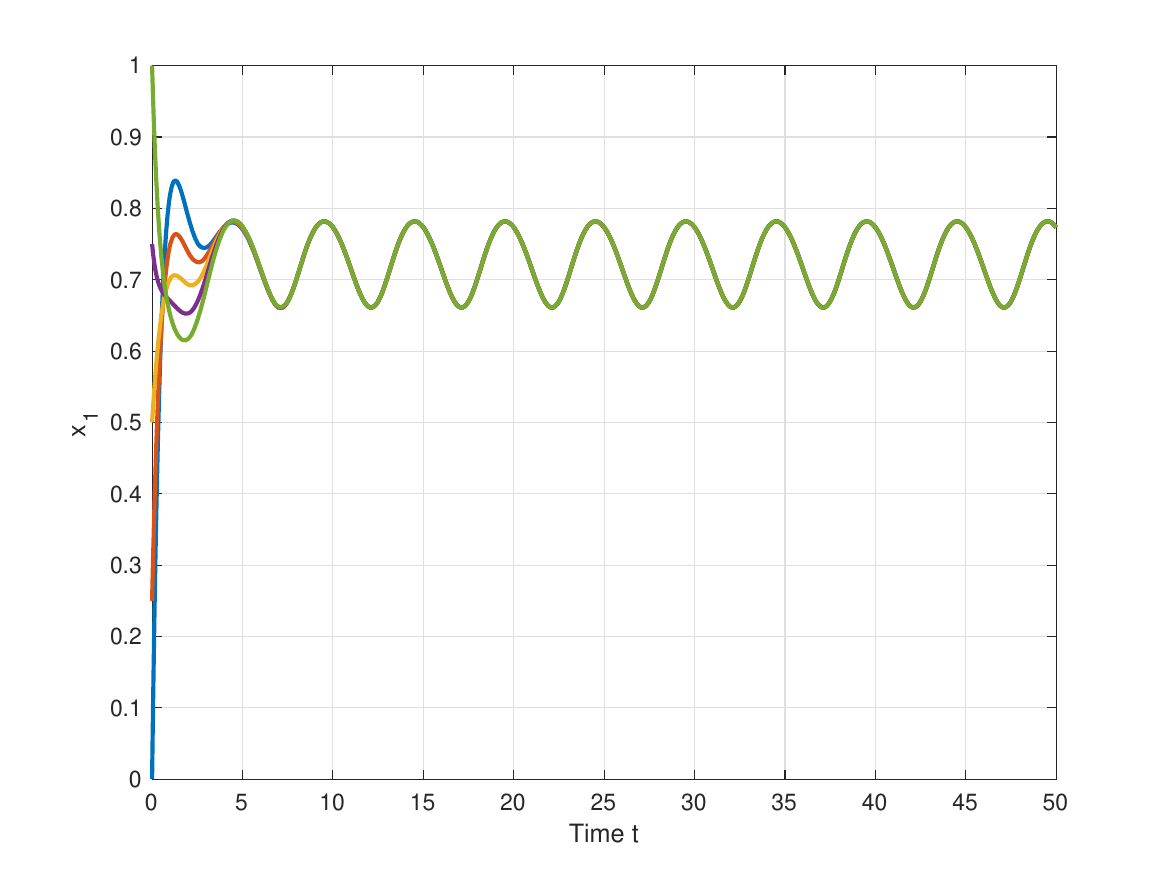}
					\caption{Trajectories of $x_1(t)$.}
				\end{minipage}
			\end{figure}
			
			\begin{figure}[htbp]
				\centering
				\begin{minipage}[b]{0.48\textwidth}
					\includegraphics[width=\textwidth]{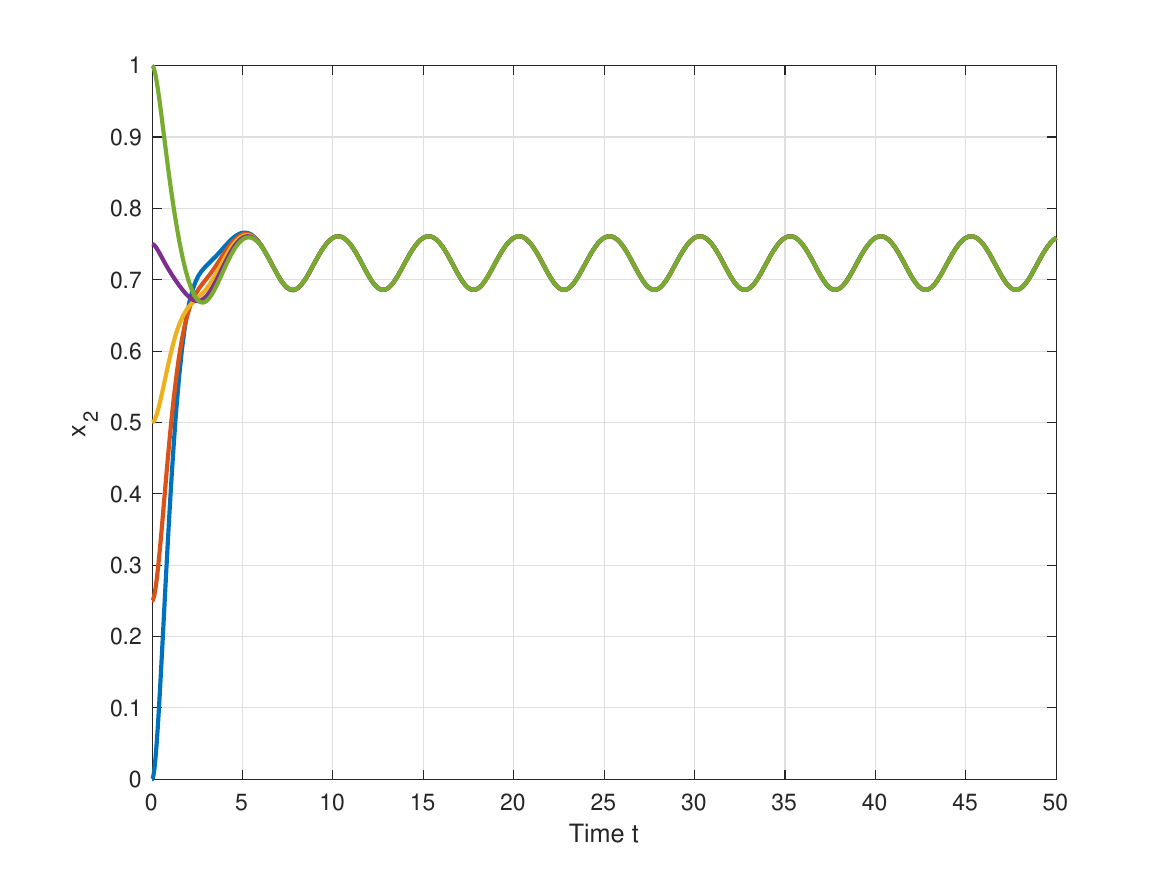}
					\caption{Trajectories of $x_2(t)$.} 
				\end{minipage}
				\hfill 
				\begin{minipage}[b]{0.48\textwidth}
					\includegraphics[width=\textwidth]{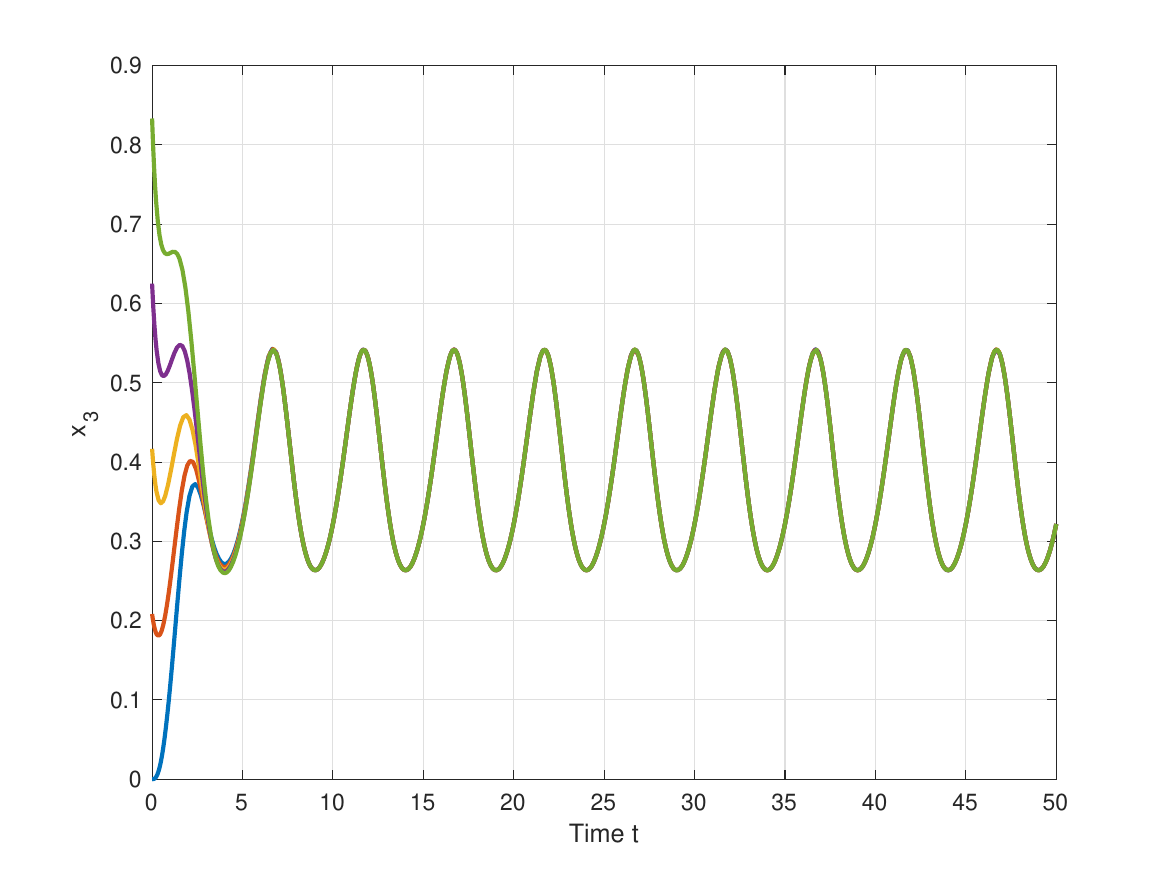}
					\caption{Trajectories of $x_3(t)$.}
				\end{minipage}
			\end{figure}

Figure 2 shows that the solution  $ \psi(t,0,\bar{x})$ to  system \eqref{g(xn)-alpha1x1}, initialized at
$\bar{x}$, ultimately converge to one common periodic orbit. Figures 3-5 respectively present the trajectory graphs for the three corresponding components of $ \psi(t,0,\bar{x})=\left(x_1(t),x_2(t),x_3(t)\right)$. These figures collectively demonstrate that each component asymptotically approaches a 5-periodic solution whose period is consistent  with the period of the time-dependent vector field.

\end{document}